\documentclass[11pt]{amsart}
\usepackage{amsmath,amssymb,latexsym,soul,cite,mathrsfs}

\usepackage{color,enumitem,graphicx}
\usepackage{dsfont}
\usepackage[colorlinks=true,urlcolor=blue,
citecolor=red,linkcolor=blue,linktocpage,pdfpagelabels,
bookmarksnumbered,bookmarksopen]{hyperref}
\usepackage[english]{babel}

\usepackage[left=2.9cm,right=2.9cm,top=2.8cm,bottom=2.8cm]{geometry}
\usepackage[hyperpageref]{backref}
\usepackage{graphicx}

\usepackage[colorinlistoftodos]{todonotes}
\makeatletter
\providecommand\@dotsep{5}
\def\listtodoname{List of Todos}
\def\listoftodos{\@starttoc{tdo}\listtodoname}
\makeatother

\numberwithin{equation}{section}





\def\R {{\rm I}\hskip -0.85mm{\rm R}}
\def\N {{\rm I}\hskip -0.85mm{\rm N}}

\DeclareMathOperator*{\essinf}{ess\,inf}

\newcommand{\rig}{\rightarrow}

\newtheorem{theorem}{Theorem}[section]
\newtheorem{proposition}[theorem]{Proposition}
\newtheorem{lemma}[theorem]{Lemma}

\newtheorem{remark}{Remark}

\title[Multiplicity of solutions for asymptotically linear elliptic problems]
{ Multiplicity of solutions for resonant and non-resonant asymptotically linear elliptic problems}

\author[J. R. S. Nascimento]{Jos\'e R. S. Nascimento}
\author[M. T. O. Pimenta]{Marcos T. O. Pimenta}
\author[J. R. Santos Jr.]{Jo\~ao R. Santos J\'unior}

\address[J.R.S. Nascimento]{\newline\indent Faculdade de Matem\'atica
\newline\indent 
Instituto de Ci\^{e}ncias Exatas e Naturais
\newline\indent 
Universidade Federal do Par\'a
\newline\indent
Avenida Augusto corr\^{e}a 01, 66075-110, Bel\'em, PA, Brazil}
\email{\href{mailto:  jrsn19@hotmail.com}{jrsn19@hotmail.com}}

\address[M. T. O. Pimenta]{\newline\indent Departamento de Matem\'atica e Computa\c{c}\~ao
\newline\indent 
Faculdade de Ci\^encias e Tecnologia
\newline\indent 
Universidade Estadual Paulista - UNESP
\newline\indent 
Rua Roberto Simonsen, 305, 19060-900, Presidente Prudente, SP, Brazil }
\email{\href{mailto: pimenta@fct.unesp.br}{pimenta@fct.unesp.br}}

\address[J. R. Santos Jr.]{\newline\indent Faculdade de Matem\'atica
\newline\indent 
Instituto de Ci\^{e}ncias Exatas e Naturais
\newline\indent 
Universidade Federal do Par\'a
\newline\indent
Avenida Augusto corr\^{e}a 01, 66075-110, Bel\'em, PA, Brazil}
\email{\href{mailto: joaojunior@ufpa.br }{joaojunior@ufpa.br}}

\thanks{J. R. S. Nascimento was partially
supported by CAPES/FAPESPA-Proc. 00.889.834/0001-08, Brazil. J. R. Santos J\'unior was partially
supported by CNPq-Proc. 302698/2015-9 and CAPES-Proc. 88881.120045/2016-01, Brazil. M.T.O. Pimenta was supported by CNPq-Proc. 442520/2014-0 and FAPESP-Proc. 2017/01756-2, Brazil.}
\subjclass[2010]{ 35J15, 35J25, 35J61}
\keywords{ Elliptic problems, asymptotically linear, ground state solution, multiplicity}


\pretolerance10000

\begin{document}

\maketitle
\begin{abstract}
Results about existence of a signed ground state solution and multiple solutions (if $f$ is odd with respect to the second variable) are proven for a class of asymptotically linear elliptic problems involving a Carath\'eodory type nonlinearity satisfying assumptions weaker than (fF) in \cite{GLZ} or $(F_{2})_{+}$ in \cite{CM}. A close relation between the behaviour of $\lim_{|t|\to \infty}f(x, t)/|t|$ and the number of solutions is stablished.
\end{abstract}
\maketitle

\section{Introduction}


We are interested in studying the existence of ground state and other nontrivial solutions to the following semilinear problem
\begin{equation}\label{P}\tag{P}
\left \{ \begin{array}{ll}
-\Delta u = f(x, u) & \mbox{in $\Omega$,}\\
u=0 & \mbox{on $\partial\Omega$,}
\end{array}\right.
\end{equation}
where $\Omega \subset\R^{N}$ is a bounded smooth domain, $N \geq 1$ and $f:\Omega\times \R\to\R$ is a Carath\'eodory function which is assumed to satisfy the following assumptions: 
\begin{itemize}
\item[$(f_1)$] $t\mapsto f(x, t)/|t|$ is increasing, a.e. in $\Omega$, $\alpha(x):=\lim_{t\to 0}f(x, t)/|t|\geq 0$ and $\eta(x):=\lim_{|t|\to \infty}f(x, t)/|t|$ are $L^{\infty}(\Omega)-$functions;
\item[$(f_{2})$] $\lambda_{m}(\eta)<1<\lambda_{1}(\alpha)$, for some $m\geq 1$, where $\lambda_{m}(\theta)$ denotes the $m$-th eigenvalue of the problem 
\begin{equation}\label{EP}
\left \{ \begin{array}{ll}
-\Delta u = \lambda \theta(x)u & \mbox{in $\Omega$,}\\
u=0 & \mbox{on $\partial\Omega$,}
\end{array}\right.
\end{equation}
 \end{itemize}
which exists due to $(f_{1})$ (see for instance \cite{DeFig}). Moreover, if $\alpha=0$ we define $\lambda_{1}(\alpha)=\infty$. We denote $\lambda_{j}$ an eigenvalue of \eqref{EP} with $\theta=1$.


Problems satisfying conditions as $(f_{1})$ are known in the literature as asymptotically linear and can be classified as resonant at infinity (if $\eta(x)=\lambda_{j}$, for some $j$) or non-resonant at infinity (if $\eta(x)\neq \lambda_{j}$, for all $j\in\N$). In fact the resonant case is subdivided depending on how small at infinity is the function $g(x,t) = \eta(x) t - f(x,t)$. The {\it worst} situation is when, a.e. in $\Omega$, $\lim_{|t| \to\infty}g(x,t)= 0$ and $\lim_{|t| \to\infty} \int_0^t g(x,s)ds = b(x)<\infty$ and is called the strong resonant case. One of the very first works dealing with this situation, in the case $b(x)=b\in\R$, is \cite{BBF} where Bartolo, Benci and Fortunato show the existence and multiplicity of solutions to strong resonant problems in the presence of some symmetry in the autonomous nonlinearity. Their proofs are based on a deformation theorem and pseudo-index theory.

Besides \cite{BBF}, there exist so many other works dealing with problem (\ref{P}). For instance, in the non-resonant case with $m=1$, Amann and Zehnder, in \cite{AZ}, proved that problem \eqref{P} has at least one nontrivial solution. Considering yet the non-resonant case with $m\geq 1$, Ahmad, in \cite{Ah}, proved the existence of at least two nontrivial solutions. Other interesting results on the same issue can be found in \cite{Am},  \cite{Da}, \cite{LW} and \cite{Str1}.  As far as the resonant case is regarded, the existence of nontrivial solution was studied in \cite{DfM} by de Figueiredo and Miyagaki and in \cite{LW1} by Li and Willem. Multiplicity results for problem \eqref{P} in the resonant case were also investigated in \cite{LW} by Li and Willem, \cite{LZ} by Liu and Zou, \cite{Su} by Su and in \cite{SZ} by Su and Zhao. It is essential to point out that in all the references above, the nonlinearity $f$ is assumed to be differentiable (or even $C^{1}$) in the second variable, being this assumption crucial in their arguments.

Recently, Li and Zhou in \cite{GLZ} considered the particular case of $(f_{1})$ in which $\alpha(x)=0$ and $\eta(x)=\eta$ is a real number. Among other things, the authors proved that in the resonant case $\eta=\lambda_{m+1}$, for some $m\in\N$, problem \eqref{P} has at least $1+\sum_{j=2}^{m}d_{j}$ pairs of nontrivial solutions, provided that $f(x, .)$ is odd with respect the second variable and 
\begin{equation}\label{fF}\tag{fF}
\beta(x):=\lim_{|t|\to\infty}\left[(1/2)f(x, t)t-F(x, t)\right]=+\infty \ \mbox{uniformly in $x\in\Omega$},
\end{equation}
where $d_{j}$ denotes the dimension of $j$-th eigenspace associated to $\lambda_{j}$ and $F$ is the primitive of $f$. Condition \eqref{fF} is just the weakest assumption  of a series of conditions which improve the well known Ambrosetti-Rabinowitz condition and it has appeared in other papers, for example, see condition $(F_{2})_{+}$ in \cite{CM}.

In the present paper some progresses are obtained regarding the previous works. In what follows, we enumerate the main contributions: (1) In order to get our main results in the resonant case, we are not requiring condition \eqref{fF}. Instead, we use the conditions \eqref{ttttt} or \eqref{finito}, which are weaker than \eqref{fF}; (2) Since $\beta$ depend on $x$, our assumptions are more general that most of those ones in the previous papers which usually require that $\beta$ be constant or infinity. Such a restriction in some previous works holds mainly due to hypothesis \eqref{fF}; (3) We provide an unified approach to deal at once with the non-resonant case, the strong and the non-strong resonant cases.

Our main results reads as follows:



\begin{theorem}\label{teo1}
Suppose that $f$ satisfies $(f_{1})-(f_{2})$ and 
\begin{equation}\label{ttttt}\tag{$\beta$}
\essinf\limits_{x\in\Omega}\beta(x)> \frac{|\eta|_{\infty} \tau^{2}}{2\lambda_{1}(\eta-\alpha)S(\Omega)^{N/2}},
\end{equation}
where $\tau$ is defined in Proposition \ref{proposition2}$(A_{1})$. Then, there exists a signed ground-state solution (in the mountain pass level) for problem $(\ref{P})$.
\end{theorem}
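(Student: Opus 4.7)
The plan is to apply the mountain pass theorem to the truncated functional
$$I_+(u)=\frac{1}{2}\int_\Omega|\nabla u|^2\,dx-\int_\Omega F(x,u^+)\,dx,\qquad u\in H_0^1(\Omega),$$
where $F(x,t)=\int_0^t f(x,s)\,ds$ and $u^+=\max\{u,0\}$. Working with $u^+$ rather than $u$ forces every nontrivial critical point of $I_+$ to be nonnegative (testing $I_+'(u)$ against $u^-$ gives $\int|\nabla u^-|^2=0$), and therefore a genuine signed solution of \eqref{P}; the ground-state property at the mountain pass level will follow from $(f_1)$, which makes the Nehari manifold well defined and fibres $t\mapsto I_+(tw)$ have a unique maximum for each $w\ge 0$. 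The linear growth $|f(x,t)|\le|\eta|_\infty|t|$ that $(f_1)$ imposes makes $I_+$ of class $C^1$ with a strongly continuous nonlinear part.

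Next I would verify the mountain pass geometry. Using the variational characterization of $\lambda_1(\alpha)$ in \eqref{EP}, together with $\lambda_1(\alpha)>1$ and the elementary bound $2F(x,t)\le(\alpha(x)+\varepsilon)t^2+C_\varepsilon|t|^{2^*}$ valid under $(f_1)$, one gets $I_+(u)\ge\rho>0$ whenever $\|u\|=r$ is small. Using the positive first eigenfunction $\phi_1$ of \eqref{EP} with weight $\eta$, the inequality $\lambda_1(\eta)\le\lambda_m(\eta)<1$ and the asymptotic linearity yield
$$\lim_{t\to+\infty}\frac{I_+(t\phi_1)}{t^2}=\frac{1}{2}\bigl(\lambda_1(\eta)-1\bigr)\int_\Omega\eta\phi_1^2<0,$$
so $I_+(t_0\phi_1)<0$ for $t_0$ large. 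This defines the mountain pass level $c=\inf_{\gamma\in\Gamma}\sup_{s\in[0,1]}I_+(\gamma(s))>0$, and Proposition \ref{proposition2}$(A_1)$ supplies the quantitative upper bound for $c$ through the constant $\tau$.

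The core of the proof is the Cerami condition at level $c$. Let $(u_n)$ satisfy $I_+(u_n)\to c$ and $(1+\|u_n\|)\|I_+'(u_n)\|_{H^{-1}}\to 0$, and suppose toward a contradiction that $\|u_n\|\to\infty$. Setting $v_n=u_n/\|u_n\|$, the bound $f(x,t)/|t|\le|\eta|_\infty$ and the compact embedding $H_0^1\hookrightarrow L^2$ allow one to pass to a weak and a.e.\ limit $v$ which solves $-\Delta v=\eta(x)v^+$ weakly; testing with $v^-$ gives $v\ge 0$, so $v$ is a nonnegative eigenfunction with eigenvalue $1$, which forces $v\equiv 0$ since $\lambda_1(\eta)<1$ is simple and principal. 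From the identity
$$I_+(u_n)-\tfrac{1}{2}\langle I_+'(u_n),u_n\rangle=\int_\Omega\Bigl[\tfrac{1}{2}f(x,u_n^+)u_n^+-F(x,u_n^+)\Bigr]dx\longrightarrow c,$$
Fatou's lemma on the set $\{v_n\to+\infty\}$ provides a lower bound controlled by $\essinf_\Omega\beta$, while the portion on which $u_n^+$ stays bounded is handled by testing the linearised equation for $v_n$ against $v_n$ and invoking the Sobolev embedding $H_0^1\hookrightarrow L^{2^*}$ with sharp constant $S(\Omega)$ and the weighted eigenvalue $\lambda_1(\eta-\alpha)$. Matching the resulting lower bound with the upper bound $c\le \tau^2/(\cdot)$ from Proposition \ref{proposition2}$(A_1)$, the numerical gap in \eqref{ttttt} produces the desired contradiction. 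Hence $(u_n)$ is bounded, and the strong $L^2$-continuity of the Nemytskii map $u\mapsto f(x,u^+)$ together with the compactness of $H_0^1\hookrightarrow L^2$ delivers strong convergence to a nontrivial critical point $u\ge 0$ with $I_+(u)=c$; the maximum principle then upgrades $u>0$ in $\Omega$.

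The principal obstacle is precisely this Cerami step. Under condition \eqref{fF} used in \cite{GLZ} the integral $\int[\tfrac{1}{2}f(x,u_n)u_n-F(x,u_n)]dx$ would automatically blow up, so its convergence to the finite value $c$ is immediately incompatible with $\|u_n\|\to\infty$; here, however, $\beta$ is only essentially bounded below, and one must carry out a sharp quantitative comparison between this finite lower bound and the upper bound furnished by Proposition \ref{proposition2}$(A_1)$. Calibrating the constants so that the factor $|\eta|_\infty\tau^{2}/[2\lambda_1(\eta-\alpha)S(\Omega)^{N/2}]$ in \eqref{ttttt} appears in exactly the right place is where the entire technical subtlety of the argument concentrates.
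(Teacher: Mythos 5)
There is a genuine gap, and it sits exactly where the content of the theorem lies: the ground-state property. Your truncated functional $I_+$ produces, at best, a nonnegative critical point $u$ of $I$ at the mountain-pass level of $I_+$. But the ground-state level is $c_{\mathcal{N}}=\inf_{\mathcal{N}}I$, and the Nehari set $\mathcal{N}$ contains \emph{all} nontrivial critical points, including nonpositive and sign-changing ones; all you can conclude is $I(u)\ge c_{\mathcal{N}}$, and you never show equality. Since $f$ is not assumed odd in Theorem \ref{teo1}, the minimizer of $I$ on $\mathcal{N}$ could perfectly well be nonpositive, in which case no critical point of $I_+$ realizes $c_{\mathcal{N}}$ at all. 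The paper resolves this by minimizing over the whole of $\mathcal{N}$ (through $\Psi=I\circ m$ on the noncomplete manifold $\mathcal{S}_{\mathcal{A}}$, a lower semicontinuous extension $\Upsilon$ to $\overline{\mathcal{S}_{\mathcal{A}}}$ with boundary values $\int_{[u\neq0]}\beta(x)dx$, Ekeland's principle, and the $(PS)$ property of Proposition \ref{main2}), and only \emph{afterwards} deduces the sign: if the minimizer had $u^{\pm}\neq0$, then $u^{\pm}\in\mathcal{N}$ and $c_{\mathcal{N}}=I(u)=I(u^{+})+I(u^{-})\ge 2c_{\mathcal{N}}>0$, a contradiction. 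This comparison against sign-changing competitors is structurally unavailable in your scheme, because you removed them from the variational problem at the outset.

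The compactness step is also not coherent as described. In your truncated setting, if $\|u_n\|\to\infty$ and $v_n=u_n/\|u_n\|\rightharpoonup v$, the limit equation is $-\Delta v=\eta(x)v^{+}$, so $v\ge 0$; moreover the Cerami (or even Palais--Smale) normalization gives $1\le\int_\Omega\eta(x)(v^{+})^{2}dx$, hence $v\neq 0$, and then $1$ would have to be the principal eigenvalue $\lambda_{1}(\eta)$, contradicting $\lambda_{1}(\eta)\le\lambda_{m}(\eta)<1$. In other words, for $I_+$ blow-up is impossible outright, and neither $\essinf_{x\in\Omega}\beta(x)$, nor $\tau$, nor $S(\Omega)$ ever enters the argument; your announced ``matching of the Fatou lower bound with the upper bound from Proposition \ref{proposition2}$(A_{1})$'' does not correspond to any step that actually occurs in this framework. (Conversely, without that normalization the weak limit could be $v=0$, and then your Fatou argument on $\{v_n\to+\infty\}$ is vacuous, since that set may be null.) The hypothesis \eqref{ttttt} is needed in the paper for a different reason entirely: for the \emph{untruncated} $I$ the limit problem $-\Delta v=\eta(x)v$ may be resonant ($\lambda_{m+k}(\eta)=1$), minimizing and $(PS)$ sequences can drift to the relative boundary of the noncomplete manifold $\mathcal{S}_{\mathcal{A}}$, and compactness is rescued by the strict gap $c_{\mathcal{N}}<\inf_{u\in\partial\mathcal{S}_{\mathcal{A}}}\int_{[u\neq0]}\beta(x)dx$ furnished by Lemma \ref{limita}, Lemma \ref{level} and \eqref{ttttt}. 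A smaller but real additional point: your geometry estimate $2F(x,t)\le(\alpha(x)+\varepsilon)t^{2}+C_{\varepsilon}|t|^{2^{*}}$ presupposes that $f(x,t)/|t|\to\alpha(x)$ uniformly in $x$, which the Carath\'eodory hypothesis $(f_{1})$ (pointwise limits only) does not provide; avoiding precisely such uniformity assumptions is one of the declared contributions of the paper, and its fiberwise analysis in Proposition \ref{proposi��o1} is designed to sidestep them.
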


\begin{theorem}\label{teore2}
Suppose that $f(x,\cdot)$ is odd a.e. in $\Omega$, satisfies $(f_{1})-(f_{2})$ and 
\begin{equation}\label{finito}\tag{$\beta_{m}$}
\essinf\limits_{x\in\Omega}\beta(x)> \frac{|\eta|_{\infty} \tau_{m}^{2}}{2\lambda_{1}(\eta-\alpha)S(\Omega)^{N/2}},
\end{equation} 
where $\tau_{m}$ is defined in \eqref{numer}. Then, problem \eqref{P} has at least $s_{m}$ pairs of nontrivial solutions, where $s_{m}=1+\sum_{j=2}^{m}d_{j}$ is the sum of the dimensions $d_{j}$ of the first $m$ eigenspaces $V_{j}$ associated to $(-\Delta,H^1_0(\Omega))$.
\end{theorem}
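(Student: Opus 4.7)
The strategy is to deduce Theorem \ref{teore2} from a symmetric minimax principle applied to the even $C^1$ energy functional
\[
I(u) = \frac{1}{2}\int_\Omega |\nabla u|^2\,dx - \int_\Omega F(x,u)\,dx, \qquad u\in H^1_0(\Omega),
\]
where $F(x,t)=\int_0^t f(x,s)\,ds$. Since $f(x,\cdot)$ is odd one has $I(-u)=I(u)$ and $I(0)=0$. The natural abstract tool is a pseudo-index theorem in the spirit of Benci and Bartolo--Benci--Fortunato (so that only the Cerami condition, and not the full Palais--Smale condition, is required), applied with the $\mathbb{Z}_2$-action $u\mapsto -u$. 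To obtain $s_m$ pairs of nontrivial critical points one must verify (i) Cerami compactness for $I$, and (ii) a symmetric linking geometry of dimension $s_m$ built on the eigenspaces $V_j$ of $(-\Delta,H^1_0(\Omega))$.

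The Cerami verification is where \eqref{finito} enters quantitatively, in complete analogy with how \eqref{ttttt} intervenes in the one-solution Theorem \ref{teo1}. Given a Cerami sequence $(u_n)$, I would first suppose $\|u_n\|\to\infty$ and pass to the normalization $v_n=u_n/\|u_n\|$; extracting a weak limit $v_\infty$ in $H^1_0(\Omega)$ and a strong limit in $L^2(\Omega)$, the relation $\langle I'(u_n),\varphi\rangle/\|u_n\|\to 0$ together with $(f_1)$ forces $-\Delta v_\infty = \eta(x)v_\infty$. In the non-resonant situation this yields $v_\infty\equiv 0$ and a standard contradiction with $\|v_n\|=1$; in the resonant situation $v_\infty$ is a nontrivial eigenfunction of $(-\Delta,\eta)$. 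Applying Fatou's lemma to the identity $2I(u_n)-\langle I'(u_n),u_n\rangle = \int_\Omega[f(x,u_n)u_n-2F(x,u_n)]\,dx$ on the set $\{v_\infty\neq 0\}$ would then produce a lower bound on $\essinf_{\Omega}\beta(x)$ that is contradicted by \eqref{finito} through the quantity $\tau_m$; the Sobolev constant $S(\Omega)$ intervenes through an $L^{2^*}$-control of the limit eigenfunction. Once boundedness is secured, strong convergence along a subsequence follows from Rellich--Kondrachov and the monotonicity in $(f_1)$.

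For the geometry, set $W_m=V_1\oplus\cdots\oplus V_m$, so that $\dim W_m=s_m$. From $\lambda_1(\alpha)>1$ in $(f_2)$ together with the strict monotonicity of $t\mapsto f(x,t)/|t|$, one obtains $\rho>0$ and $c_0>0$ with $I(u)\geq c_0$ for every $u\in H^1_0(\Omega)$ such that $\|u\|=\rho$. On the other hand, the assumption $\lambda_m(\eta)<1$ supplies an $s_m$-dimensional subspace along which the quadratic form $u\mapsto\int_\Omega(|\nabla u|^2-\eta(x)u^2)\,dx$ is negative definite; combining this with $(f_1)$ and the inequality $F(x,t)\leq \tfrac{1}{2}\eta(x)t^2$ that it yields at infinity, a direct calculation shows $I(u)\to -\infty$ uniformly on $W_m$ as $\|u\|\to\infty$. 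Hence there exists $R>\rho$ such that $I\leq 0$ on $\{u\in W_m:\|u\|\geq R\}$, which is the required symmetric linking.

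Finally, define the minimax values
\[
c_k=\inf_{A\in\Gamma_k}\sup_{u\in A} I(u),\qquad k=1,\ldots,s_m,
\]
where $\Gamma_k$ is the standard pseudo-index class of closed symmetric subsets of $H^1_0(\Omega)\setminus\{0\}$ with $\mathbb{Z}_2$-genus at least $k$ that link the sphere $\{\|u\|=\rho\}$. The geometry above forces $0<c_0\leq c_1\leq\cdots\leq c_{s_m}<\infty$, the Cerami condition ensures each $c_k$ is a critical value of $I$, and a genus argument deals with multiplicities whenever $c_k=c_{k+1}$, producing altogether at least $s_m$ pairs of nontrivial critical points, i.e.\ solutions of \eqref{P}. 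The main obstacle in the whole scheme is the Cerami step: in earlier works \eqref{fF} renders the argument almost automatic because $\beta\equiv +\infty$, whereas here the finite but carefully calibrated bound \eqref{finito} must be turned into a contradiction by a sharp choice of test function in $W_m$ encoded in the constant $\tau_m$.
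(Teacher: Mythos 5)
Your proposal is correct in outline, but it takes a genuinely different route from the paper. The paper never runs a minimax for the free functional $I$ on $H^1_0(\Omega)$: it reduces everything to the Nehari manifold via the homeomorphism $m:\mathcal{S}_{\mathcal{A}}\to\mathcal{N}$ of Proposition \ref{proposition2}, and performs Krasnoselski genus minimax for the reduced functional $\Psi=I\circ m$ on the \emph{non-complete} manifold $\mathcal{S}_{\mathcal{A}}$ (Lemmas \ref{nosso} and \ref{minimax}, Proposition \ref{mult}), with Struwe's deformation kept inside $\mathcal{S}_{\mathcal{A}}$ by means of the boundary estimate of Proposition \ref{main1}. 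You instead invoke the Bartolo--Benci--Fortunato pseudo-index machinery \cite{BBF} for the even functional $I$ itself, with the symmetric linking given by a small sphere and the $s_m$-dimensional space $V_1\oplus\cdots\oplus V_m$. The quantitative core is the same in both arguments: an unbounded compactness-violating sequence, after the normalization $v_n=u_n/\|u_n\|$, produces a nonzero solution of $-\Delta v_\infty=\eta(x)v_\infty$ and, through the generalized Fatou lemma (Proposition \ref{Fatou}), the bound $c\geq\int_{[v_\infty\neq 0]}\beta(x)\,dx\geq (S(\Omega)/|\eta|_\infty)^{N/2}\essinf_{x\in\Omega}\beta(x)$ (the content of Lemma \ref{limita} and Propositions \ref{main1}--\ref{main2}); test functions in $V_1\oplus\cdots\oplus V_m$ bound the minimax levels by $\tau_m^{2}/2\lambda_{1}(\eta-\alpha)$ (the computation \eqref{zzz}); and \eqref{finito} is exactly what separates these two numbers. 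Your insistence on Cerami rather than Palais--Smale is a substantive and correct point: for an unbounded free sequence the identity $2I(u_n)-I'(u_n)u_n\to 2c$ requires $I'(u_n)u_n\to 0$, which $(PS)$ alone does not give; the paper sidesteps this because on $\mathcal{N}$ the identity $I(u_n)=\int_{\Omega}[\frac{1}{2}f(x,u_n)u_n-F(x,u_n)]dx$ is exact. What each approach buys: yours works on a complete space, so all the difficulties created by the non-completeness of $\mathcal{S}_{\mathcal{A}}$ (the extension $\Upsilon$, Ekeland on $\overline{\mathcal{S}_{\mathcal{A}}}$, confining the deformation away from $\partial\mathcal{S}_{\mathcal{A}}$) simply disappear, and your Cerami/Fatou step is arguably more direct than the paper's dichotomy in Proposition \ref{main2}; conversely, the Nehari reduction makes positivity of all levels automatic (Lemma \ref{lemma3}) and never requires a uniform geometry near the origin.

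That last point is the one place where your sketch conceals real work. You assert that $\lambda_1(\alpha)>1$ and the monotonicity in $(f_1)$ yield $\rho,c_0>0$ with $I(u)\geq c_0$ whenever $\|u\|=\rho$. Since $f$ is merely Carath\'eodory and the limits in $(f_1)$ are pointwise a.e.\ in $x$ (not uniform), the standard estimate $F(x,t)\leq\frac{1}{2}(\alpha(x)+\varepsilon)t^2+C_\varepsilon|t|^{p}$ is not available verbatim, and without it the claim is not an observation but a lemma. It is a true lemma: by Lemma \ref{lemma2}$(ii)$ one has $F(x,t)\leq g_\delta(x)t^2$ for $|t|\leq\delta$, where $g_\delta(x):=\max\{F(x,\delta),F(x,-\delta)\}/\delta^2\downarrow\alpha(x)/2$ a.e.\ as $\delta\downarrow 0$, while $F(x,t)\leq \frac{1}{2}|\eta|_\infty\,\delta^{2-p}|t|^{p}$ for $|t|\geq\delta$ (any $2<p\leq 2^{\ast}$); one then needs $\lambda_1(2g_\delta)>1$ for small $\delta$, which follows from a compactness argument on normalized first eigenfunctions of the weights $2g_\delta$, using $\lambda_1(\alpha)>1$ from $(f_2)$. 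Once this lemma is supplied, your scheme closes, and the abstract theorem of \cite{BBF} (with Cerami compactness only at the levels $c_k\in[c_0,\ \tau_m^2/2\lambda_1(\eta-\alpha)]$) delivers the $s_m$ pairs.
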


Our approach is based on the Nehari method which consists in minimizing the energy functional $I$ over the so called Nehari manifold $\mathcal{N}$, a set which contains all the nontrivial solutions of the problem. Although this method has been carefully treated by Szulkin and Weth in \cite{SW} for the case of nonlinearities which satisfies superquadraticity conditions at infinity, it is not a trivial task to use it to treat asymptotically linear problems. Just to cite the main difficulties to apply the method of the Nehari manifold to problems involving asymptotically linear nonlinearities, we are going to describe it here in a very superficial way. Roughly speaking, the method consists in proving the existence of a homeomorphism $m$ between $\mathcal{N}$ and a submanifold $\mathcal{M}$ of $H_{0}^{1}$. Despite the absence of a differentiable structure in $\mathcal{N}$, such a homeomorphism allows us to define a $C^1$-functional $\Psi$ on $\mathcal{M}$ with very useful properties. For example, if $u$ is a critical point of $\Psi$ then $m(u)$ is a critical point of $I$ and if $\{u_{n}\}$ is a $(PS)_c$ sequence for $\Psi$ then $\{m(u_{n})\}$ is $(PS)_c$ sequence for $I$.

Differently of the case of nonlinearities satisfying superquadraticity conditions at infinity, in which $\mathcal{M}$ is the unit sphere $\mathcal{S}$ of $H_{0}^{1}$, it is not clear exactly how is $\mathcal{M}$ in our case. After a careful study (see Lemma \ref{lemma1}, Propositions \ref{proposição1} and \ref{proposition2}) we have proven that $\mathcal{M}=\mathcal{S}_{\mathcal{A}}:=\mathcal{S}\cap\mathcal{A}$ is a noncomplete submanifold of $H_{0}^{1}$, where $\mathcal{A}:=\left\{u\in H_{0}^{1}(\Omega): \|u\|^{2}<\int_{\Omega}\eta(x)u^{2}dx\right\}$. This fact brings additional problems. Indeed, it is important to assure that minimizing sequences $\{u_{n}\}$ for $\Psi$ are not near the boundary of $\mathcal{S}_{\mathcal{A}}$. In \cite{SW}, this step is strongly based in the fact that $f$ has a superquadratic growth at infinity, what implies that $\{\Psi(u_{n})\}$ tends to infinity as the distance from $\{u_{n}\}$ to the boundary tends to zero. In our case, the behaviour of $\{\Psi(u_{n})\}$ at infinity, as $dist(u_{n}, \partial \mathcal{S}_{\mathcal{A}})\to 0$, is indefinite. This fact makes difficult, for example, to know how to extend $\Psi$ to $\overline{\mathcal{S}_{\mathcal{A}}}$ in order to apply the Ekeland variational principle, which is crucial to prove that $\{u_{n}\}$ can be seen as a Palais-Smale sequence. In order to answer these fundamental questions, we have provided a very useful technical result, see Proposition \ref{Fatou}, which we believe is important by itself. Finally, it is also a delicate task to prove that $\Psi$ satisfies the $(PS)_c$ condition under conditions \eqref{ttttt} and \eqref{finito} (see Proposition \ref{main2}).


%
%
%

The paper is organized as follows. In Section \ref{sec:prelim} we present the variational background and a useful technical proposition. In Section \ref{sec:Nehari} we study deeply the Nehari manifold and some of its topological features. In Section \ref{sec:ground} we state and prove our main result of existence of ground-state solution. In Section \ref{sec:multiplicity} we introduce some classical tools of genus theory and we prove our main result of multiplicity of solutions.

\medskip

\section{Preliminaries}\label{sec:prelim}

Our main goal in this section is to introduce the variational background for \eqref{P} and to prove a very useful proposition which provides us a slight generalization of the  Fatou Lemma. 

We stand for $I:H_{0}^{1}(\Omega)\to\R$ the energy functional associated to problem \eqref{P}, given by
$$
I(u)=\frac{1}{2}\|u\|^{2}-\int_{\Omega}F(x, u)dx,
$$
where $\displaystyle \|u\|^{2} = \int_\Omega |\nabla u|^2 dx$ and $\displaystyle  F(x, t)=\int_{0}^{t}f(x, s)ds$. It is well known that $I\in C^{1}(H_{0}^{1}(\Omega), \R)$ and
$$
I'(u)\varphi=\int_{\Omega}\nabla u\nabla \varphi dx-\int_{\Omega}f(x, u)\varphi dx.
$$
In this way, critical points of $I$ are weak solutions of \eqref{P}.

The Nehari manifold associated to the functional $I$ is the set
$$
\mathcal{N}=\{u\in H_{0}^{1}(\Omega)\backslash\{0\}:\|u\|^{2}=\int_{\Omega}f(x, u)udx\}.
$$
Since $f$ is just a Carath\'eodory function, we cannot ensure that $\mathcal{N}$ is a smooth manifold. In this paper, $\mathcal{S}$ denotes the unit sphere in $H_{0}^{1}(\Omega)$
and
$$
\mathcal{A}:=\left\{u\in H_{0}^{1}(\Omega): \|u\|^{2}<\int_{\Omega}\eta(x)u^{2}dx\right\}.
$$ 

\medskip

Throughout this paper we denote by $e_{j}$ a normalized (in $H_{0}^{1}(\Omega)$ norm) eigenfunction associated to $\lambda_{j}(\eta)$ and we use the symbol $[u\neq 0]$ to denote the set $\{x\in\Omega: u(x)\neq 0\}$. Moreover, $|A|$ will always denote the Lebesgue measure of a measurable set $A\subset\R^{N}$.

\begin{lemma}\label{lemma1}
If $f$ satisfies $(f_{1})-(f_{2})$, the following claims hold:
\begin{enumerate}
\item[$(i)$] The set $\mathcal{A}$ is open and nonempty;
\item[$(ii)$] $\partial \mathcal{A}=\{u\in H_{0}^{1}(\Omega): \|u\|^{2}=\int_{\Omega}\eta(x)u^{2}dx\}$;
\item[$(iii)$] $\mathcal{A}^{c}=\{u\in H_{0}^{1}(\Omega): \|u\|^{2}\geq\int_{\Omega}\eta(x)u^{2}dx\}$;
\item[$(iv)$] $\mathcal{N}\subset \mathcal{A}$;
\item[$(v)$] $\mathcal{S}\cap\mathcal{A}\neq\emptyset$.
\end{enumerate}
\end{lemma}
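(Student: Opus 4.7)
The functional $\varphi: H_{0}^{1}(\Omega) \to \mathbb{R}$, $\varphi(u) := \|u\|^{2} - \int_{\Omega}\eta(x)u^{2}\,dx$, is a continuous quadratic form because $\eta \in L^{\infty}(\Omega)$, so $\mathcal{A}=\varphi^{-1}\bigl((-\infty,0)\bigr)$ is open, which settles the openness part of (i), and (iii) is immediate from the definition of $\mathcal{A}$. For the nonemptiness in (i), hypothesis $(f_{2})$ gives $\lambda_{1}(\eta) \leq \lambda_{m}(\eta)<1$; a first eigenfunction $\phi_{1}$ of the weighted problem $-\Delta u=\lambda\eta(x) u$ then satisfies $\|\phi_{1}\|^{2}=\lambda_{1}(\eta)\int_{\Omega}\eta \phi_{1}^{2}\,dx < \int_{\Omega}\eta \phi_{1}^{2}\,dx$, so $\varphi(\phi_{1})<0$ and $\phi_{1}\in\mathcal{A}$. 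Item (v) then follows from the homogeneity $\varphi(tu)=t^{2}\varphi(u)$: the normalized function $\phi_{1}/\|\phi_{1}\|$ lies simultaneously in $\mathcal{S}$ and $\mathcal{A}$.

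For (ii), the inclusion $\partial\mathcal{A}\subseteq\varphi^{-1}(0)$ is standard: $\overline{\mathcal{A}}\subseteq\{\varphi\leq 0\}$ by continuity, and $\partial\mathcal{A}\cap\mathcal{A}=\emptyset$ (as $\mathcal{A}$ is open) forces $\varphi\geq 0$ on the boundary. For the reverse inclusion, given $u$ with $\varphi(u)=0$ I perturb in the direction $\phi_{1}$ using the symmetric bilinear form associated with $\varphi$:
\[
\varphi(u+s\phi_{1})=2s\,B(u,\phi_{1})+s^{2}\varphi(\phi_{1}),\qquad B(u,v):=\int_{\Omega}\nabla u\cdot\nabla v\,dx-\int_{\Omega}\eta\, u v\,dx.
\]
If $B(u,\phi_{1})\neq 0$ I choose the sign of $s$ opposite to that of $B(u,\phi_{1})$ so that the linear term is negative and dominates the (also negative) quadratic term for small $|s|$; if instead $B(u,\phi_{1})=0$ the quadratic term $s^{2}\varphi(\phi_{1})<0$ already does the job. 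In either case $u+s\phi_{1}\in\mathcal{A}$ for sufficiently small $|s|>0$, hence $u\in\overline{\mathcal{A}}\setminus\mathcal{A}=\partial\mathcal{A}$.

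For (iv), hypothesis $(f_{1})$ makes $t\mapsto f(x,t)/|t|$ strictly increasing with limit $\eta(x)$ as $|t|\to\infty$, so $f(x,t)/|t|<\eta(x)$ strictly for a.e. $x\in\Omega$ and every finite $t\neq 0$. Accounting for the sign of $t$ this upgrades to the pointwise strict bound $f(x,t)\,t<\eta(x)t^{2}$ on $[u\neq 0]$. For $u\in\mathcal{N}\subset H_{0}^{1}(\Omega)\setminus\{0\}$ the set $[u\neq 0]$ has positive measure, and integrating this strict inequality yields
\[
\|u\|^{2}=\int_{\Omega}f(x,u)u\,dx<\int_{\Omega}\eta(x)u^{2}\,dx,
\]
i.e., $u\in\mathcal{A}$.

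The main obstacle is the reverse inclusion in (ii): the first-order term $B(u,\phi_{1})$ can vanish (for instance when $u$ itself is an $\eta$-weighted eigenfunction with eigenvalue $1$), so one cannot rely solely on the derivative of $\varphi$ to exit $\varphi^{-1}(0)$. The fact that the chosen perturbation direction $\phi_{1}$ lies in $\mathcal{A}$, and not merely on $\partial\mathcal{A}$, is crucial: it guarantees a strictly negative second-order correction $s^{2}\varphi(\phi_{1})$ that absorbs any such degeneracy. The remaining items reduce to continuity, positive homogeneity of $\varphi$, and the pointwise consequences of $(f_{1})$–$(f_{2})$.
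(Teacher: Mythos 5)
Your proof is correct and follows essentially the same route as the paper: the continuous quadratic functional $\varphi(u)=\|u\|^{2}-\int_{\Omega}\eta(x)u^{2}dx$ together with eigenfunctions of the weighted problem (using $\lambda_{m}(\eta)<1$ from $(f_{2})$) for (i) and (v), and the pointwise consequence of $(f_{1})$ that $f(x,t)/t<\eta(x)$ for $t\neq 0$, integrated over the positive-measure set $[u\neq 0]$, for (iv). The only difference is that you carefully prove the inclusion $\varphi^{-1}(0)\subseteq\partial\mathcal{A}$ in (ii) via the perturbation $u+s\phi_{1}$ with $\phi_{1}\in\mathcal{A}$, a detail the paper simply declares ``immediate''; this is a worthwhile addition, since scaling alone cannot exit the cone $\varphi^{-1}(0)$ and some such argument is genuinely needed.
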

\begin{proof} ($i$) By $(f_{2})$, if $u$ is an eigenfunction associated to $\lambda_{j}(\eta)$, for some $j\in\{1, \ldots, m\}$, then $u$ belongs to $\mathcal{A}$. Moreover, $\mathcal{A}=\varphi^{-1}(-\infty, 0)$ where $\varphi:H_{0}^{1}(\Omega)\to \mathbb{R}$ is the continuous function $\varphi(u)=\|u\|^{2}-\int_{\Omega}\eta(x)u^{2}dx$. The items ($ii$) and ($iii$) are immediate.

 ($iv$) If $u\in\mathcal{N}$ then, 
$$
\|u\|^{2}=\int_{[u\neq 0]}\left[\frac{f(x, u)}{u}\right]u^{2}dx.
$$
By $(f_{1})$, we conclude that 
$$
\|u\|^{2}<\int_{\Omega}\eta(x)u^{2}dx.
$$

($v$) It is sufficient to choose a normalized eigenfunction $e_{j}$ associated to $\lambda_{j}(\eta)$ for any $j\in\{1, \ldots, m\}$. Clearly, one has that $e_{j}\in\mathcal{S}\cap\mathcal{A}$.
\end{proof}

\medskip

In the sequel we will denote $\mathcal{S}_{\mathcal{A}}:=\mathcal{S}\cap\mathcal{A}$. Observe that being $\mathcal{S}$ a $C^{1}$-submanifold of $H^{1}_{0}(\Omega)$, by $(i)$ and $(v)$ of Lemma $\ref{lemma1}$, $\mathcal{S}_{\mathcal{A}}$ is an open set in $\mathcal{S}$ and, therefore, it is also a $C^{1}$-submanifold of $H^{1}_{0}(\Omega)$, which is noncomplete if $1\leq \lambda_{m+k}(\eta)$ for some $k\geq 1$. In fact, in this case we have $e_{j}\in \mathcal{S}_\mathcal{A}^{c}$ for all $j\geq m+k$. 

From now on, $\partial \mathcal{S}_\mathcal{A}$ denotes the relative boundary of $\mathcal{S}_\mathcal{A}$ as a topological subspace of $\mathcal{S}$ and $\mathcal{S}_\mathcal{A}^c$ is the set $\mathcal{S}\backslash \mathcal{S}_\mathcal{A}$. Moreover, from $(ii)$ and $(iii)$, it is clear that $\partial \mathcal{S}_{\mathcal{A}}=\{u\in \mathcal{S}: 1=\int_{\Omega}\eta(x)u^{2}dx\}$ and $\mathcal{S}_{\mathcal{A}}^{c}=\{u\in \mathcal{S}: 1\geq\int_{\Omega}\eta(x)u^{2}dx\}$.

\medskip

\begin{lemma}\label{limita}
Following inequality holds:
$$
\inf_{u\in \partial\mathcal{S}_{\mathcal{A}}}|[u\neq 0]|\geq \left( S(\Omega)/|\eta|_{\infty}\right)^{N/2},
$$
where $1/S(\Omega)$ is the best constant of the continuous embedding from $H_{0}^{1}(\Omega)$ into $L^{2^{\ast}}(\Omega)$.
\end{lemma}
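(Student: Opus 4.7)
The plan is to combine Hölder's inequality with the Sobolev embedding. Fix $u\in \partial\mathcal{S}_{\mathcal{A}}$ and set $A:=[u\neq 0]$; by the characterization of $\partial\mathcal{S}_\mathcal{A}$ recalled right before the statement, we have $\|u\|=1$ and $\int_{\Omega}\eta(x)u^{2}dx=1$.

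First I would bound the quantity $1=\int_{\Omega}\eta(x)u^{2}dx$ from above by using that $\eta\in L^\infty(\Omega)$ and that the integrand vanishes off $A$, obtaining
$$
1=\int_{A}\eta(x)u^{2}dx\leq |\eta|_{\infty}\int_{A}u^{2}dx.
$$
Next, I would apply Hölder's inequality on $A$ with exponents $2^{*}/2$ and $N/2$ (so that $1-2/2^{*}=2/N$), yielding
$$
\int_{A}u^{2}dx\leq |A|^{2/N}\left(\int_{A}|u|^{2^{*}}dx\right)^{2/2^{*}}\leq |A|^{2/N}\,\|u\|_{L^{2^{*}}(\Omega)}^{2}.
$$
Finally, by the Sobolev embedding with best constant $1/S(\Omega)$, that is, $\|u\|_{L^{2^{*}}(\Omega)}^{2}\leq S(\Omega)^{-1}\|u\|^{2}=S(\Omega)^{-1}$, I would combine the three inequalities to get
$$
1\leq \frac{|\eta|_{\infty}}{S(\Omega)}\,|A|^{2/N},
$$
which after rearranging gives precisely $|A|\geq (S(\Omega)/|\eta|_{\infty})^{N/2}$, and taking the infimum over $u\in\partial\mathcal{S}_\mathcal{A}$ concludes.

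There is no substantial obstacle: the argument is a straightforward two-line chain of Hölder plus Sobolev once one observes that the constraint $\int_\Omega \eta u^2=1$ forces the support of $u$ to carry enough mass. The only minor care point is handling the cases $N=1,2$, where one replaces $2^{*}$ by any sufficiently large exponent (and adjusts the Hölder dual accordingly) using that $H^1_0(\Omega)\hookrightarrow L^q(\Omega)$ for every $q<\infty$ when $N=2$ and continuously into $L^\infty(\Omega)$ when $N=1$; in both cases the same scheme yields the desired estimate.
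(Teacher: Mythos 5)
Your proof is correct and follows exactly the paper's argument: both start from $1=\int_\Omega \eta(x)u^2\,dx \le |\eta|_\infty \int_{[u\neq 0]}u^2\,dx$, apply H\"older's inequality on $[u\neq 0]$ with exponents $2^*/2$ and $N/2$, and then use the Sobolev embedding $H_0^1(\Omega)\hookrightarrow L^{2^*}(\Omega)$ with constant $1/S(\Omega)$ to conclude $1\le |\eta|_\infty S(\Omega)^{-1}|[u\neq 0]|^{2/N}$. Your closing remark on the low-dimensional cases $N=1,2$ (where $2^*$ must be replaced by a finite exponent or $L^\infty$) is a reasonable extra precaution that the paper itself does not address, but it does not change the substance of the argument.
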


\begin{proof}
By using H\"older inequality, it follows that, for each $u\in \partial\mathcal{S}_{\mathcal{A}}$
$$
1\leq |\eta|_{\infty}\int_{[u\neq 0]}u^{2}dx\leq |\eta|_{\infty}|u|_{2^{\ast}}^{2}|[u\neq 0]|^{2/N}.
$$
By continuous Sobolev embedding from $H_{0}^{1}(\Omega)$ into $L^{2^{\ast}}(\Omega)$, 
$$
1\leq |\eta|_{\infty}(1/S(\Omega))|[u\neq 0]|^{2/N}.
$$
Therefore,
$$
|[u\neq 0]|\geq (S(\Omega)/|\eta|_{\infty})^{N/2}, \ \forall \ u\in \partial\mathcal{S}_{\mathcal{A}}.
$$
The result is proven.
\end{proof}

Next lemma provides some consequences of hypothesis $(f_{1})$ which will be used later on.

\begin{lemma}\label{lemma2}
Suppose $(f_{1})$ holds. Then, a.e. in $\Omega$,
\begin{enumerate}
\item[$(i)$] $t\mapsto (1/2)f(x, t)t-F(x, t)$ is increasing in $(0,\infty)$ and decreasing in $(-\infty, 0)$;
\item[$(ii)$] $t\mapsto F(x, t)/t^{2}$ is increasing in $(0,\infty)$ and decreasing in $(-\infty, 0)$;
\item[$(iii)$] $f(x, t)/t>2F(x, t)/t^{2}$ for all $t\in \R\backslash\{0\}$.
\end{enumerate}
\end{lemma}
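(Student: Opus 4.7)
My plan is to reduce all three assertions to the strict monotonicity of $\phi(x,t):=f(x,t)/|t|$ provided by $(f_{1})$ together with one elementary substitution. Writing $f(x,r)=|r|\phi(x,r)$ and substituting $r=vt$ with $v\in[0,1]$ in $F(x,t)=\int_{0}^{t}f(x,r)\,dr$ gives, after careful bookkeeping of the sign coming from $|r|=-r$ when $r<0$,
\begin{equation*}
\frac{F(x,t)}{t^{2}}=\int_{0}^{1}v\,\phi(x,vt)\,dv \quad\text{for } t>0,\qquad \frac{F(x,t)}{t^{2}}=-\int_{0}^{1}v\,\phi(x,vt)\,dv \quad\text{for } t<0.
\end{equation*}
Similarly $f(x,t)/t=\phi(x,t)$ on $(0,\infty)$ and $f(x,t)/t=-\phi(x,t)$ on $(-\infty,0)$. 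These identities drive the proofs of (ii) and (iii).

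For (ii), on $(0,\infty)$ the right-hand side $\int_{0}^{1}v\phi(x,vt)\,dv$ is increasing in $t$ because for each fixed $v\in(0,1)$ the map $t\mapsto\phi(x,vt)$ is increasing. On $(-\infty,0)$, as $t$ increases toward $0$ the argument $vt$ also increases (since $v>0$), so $\phi(x,vt)$ increases and the sign flip in the identity turns this into $F(x,t)/t^{2}$ being decreasing. For (iii), combining the two identities gives
\begin{equation*}
\frac{f(x,t)}{t}-\frac{2F(x,t)}{t^{2}}=2\int_{0}^{1}v\bigl[\phi(x,t)-\phi(x,vt)\bigr]\,dv
\end{equation*}
for $t>0$ and the same expression with the bracket replaced by $[\phi(x,vt)-\phi(x,t)]$ for $t<0$. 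Both are strictly positive: in the first case because $vt<t$ forces $\phi(x,vt)<\phi(x,t)$ on $(0,1)$, and in the second because $vt>t$ forces $\phi(x,vt)>\phi(x,t)$, in both cases by the strict monotonicity of $\phi(x,\cdot)$.

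For (i) we cannot differentiate $h(x,t):=\tfrac{1}{2}f(x,t)t-F(x,t)$, since $f$ is only Carath\'eodory; instead I would compare at two points. For $0<s<t$, a direct computation yields
\begin{equation*}
h(x,t)-h(x,s)=\tfrac{1}{2}\bigl[t^{2}\phi(x,t)-s^{2}\phi(x,s)\bigr]-\int_{s}^{t}r\,\phi(x,r)\,dr,
\end{equation*}
and bounding the integral above by $\phi(x,t)(t^{2}-s^{2})/2$ (since $\phi(x,r)\leq\phi(x,t)$ for $r\in[s,t]$) leaves after cancellation $h(x,t)-h(x,s)\geq\tfrac{s^{2}}{2}[\phi(x,t)-\phi(x,s)]>0$. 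For $t<0$ the change of variables $\tau=-t$, $\tilde\phi(u):=\phi(x,-u)$ (decreasing in $u$) yields $h(x,-\tau)=-\tfrac{\tau^{2}}{2}\tilde\phi(\tau)+\int_{0}^{\tau}u\tilde\phi(u)\,du$, and an analogous comparison at $\tau_{2}<\tau_{1}$, now using $\tilde\phi(u)\geq\tilde\phi(\tau_{1})$ on $[\tau_{2},\tau_{1}]$, shows this is strictly increasing in $\tau$ and hence strictly decreasing in $t$. The only real obstacle I foresee is careful sign bookkeeping on $(-\infty,0)$: the substitution $r=-u$ reverses the sense of monotonicity of $\phi$, and an inattentive manipulation easily produces the wrong inequality direction.
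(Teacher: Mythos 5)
Your proposal is correct, and it splits as follows when compared with the paper. For item (i) your argument coincides with the paper's: both compare $h(x,\cdot)$ at two points $0<s<t$, bound $\int_s^t r\,\phi(x,r)\,dr$ from above by $\phi(x,t)(t^2-s^2)/2$ using the monotonicity of $\phi(x,\cdot)=f(x,\cdot)/|\cdot|$, and are left with the strictly positive remainder $\tfrac{s^2}{2}\left[\phi(x,t)-\phi(x,s)\right]$; the paper writes this as a chain of two strict inequalities and treats the negative half-line by the same symmetry you invoke. Where you genuinely diverge is in (ii) and (iii): the paper merely asserts that these follow from (i) --- implicitly, (iii) because $(1/2)f(x,t)t-F(x,t)$ increases on $(0,\infty)$ away from its limit value $0$ at $t=0^+$ (symmetrically for $t<0$), and (ii) by differentiating $t\mapsto F(x,t)/t^{2}$ (legitimate, since $f(x,\cdot)$ is continuous, so $F(x,\cdot)$ is $C^{1}$) and reading the sign of the derivative off (iii). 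You instead prove (ii) and (iii) directly, independently of (i), from the representation $F(x,t)/t^{2}=\pm\int_0^1 v\,\phi(x,vt)\,dv$ combined with $2\int_0^1 v\,dv=1$. This buys a self-contained, differentiation-free argument in which both the strictness of the inequalities and the sign bookkeeping on $(-\infty,0)$ are completely explicit, whereas the paper's route is shorter but leaves the limit-at-zero and differentiation steps to the reader. Both routes are valid proofs of the same statements.
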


\begin{proof}
$(i)$ Let $t_{1}>t_{2}>0$. Then, a.e. in $\Omega$,
\begin{eqnarray*}
\frac{1}{2}f(x, t_{1})t_{1}-F(x, t_{1}) &=&\frac{1}{2}f(x, t_{1})t_{1}-F(x, t_{2})-\int_{t_{2}}^{t_{1}}\left[\frac{f(x, s)}{s}\right]sds\\
&>&\frac{1}{2}f(x, t_{1})t_{1}-F(x, t_{2})-\frac{f(x, t_{1})}{t_{1}}\int_{t_{2}}^{t_{1}}sds\\
&=& \frac{1}{2}f(x, t_{1})t_{1}-F(x, t_{2})-\frac{f(x, t_{1})}{t_{1}}\frac{(t_{1}^{2}-t_{2}^{2})}{2}\\
&=&\frac{f(x, t_{1})}{t_{1}}\frac{t_{2}^{2}}{2}-F(x, t_{2})\\
&> &\frac{1}{2}f(x, t_{2})t_{2}-F(x, t_{2}),
\end{eqnarray*}
where it was used $(f_{1})$ in the last two inequalities. The other case is analogous. Items $(ii)$ and $(iii)$ follows from $(i)$.
\end{proof}

\begin{remark}\label{rem0}
It follows from Lemma \ref{lemma2}$(i)$ that $\beta:\Omega\to(0,\infty]$ is well defined by
$$
\beta(x):=\lim_{|t|\to\infty}\left[(1/2)f(x, t)t-F(x, t)\right].
$$

\end{remark}

To finish this section, we will provide a technical result. In fact, item $(i)$ allows us to conclude a kind of lower semicontinuity for the map $u\mapsto \chi_{[u\neq 0]}$. Moreover, item $(iii)$ gives us an interesting generalization of the well known Fatou Lemma (to recover the Fatou Lemma simply choose $v_n=u_{n}$) which will play a crucial role in the proof of Theorem \ref{teo1}.

\begin{proposition}\label{Fatou}
Let $\{u_{n}\}$ be a sequence of measurable functions $u_{n}:\Omega\to \R$. Then,
\begin{enumerate}
\item[$(i)$] 
$$
\chi_{\left[\liminf\limits_{n\to\infty} u_{n}\neq 0\right]}(x)\leq \liminf\limits_{n\to\infty}\chi_{[u_{n}\neq 0]}(x) \ \mbox{in $\Omega$};
$$

\item[$(ii)$] If, in addition, $u_{n}(x)\to u(x)$ a. e. in $\Omega$, then  
$$
\lim_{n\to\infty}\chi_{[u_{n}\neq 0]}(x)=\chi_{\left[u\neq 0\right]}(x) \ \mbox{a. e. in $[u\neq 0]$};
$$

\item[$(iii)$] Let $\{v_{n}\}$ be a sequence of nonnegative measurable functions $v_{n}:\Omega\to \R$. Then
$$
\int_{\left[\liminf\limits_{n\to\infty} u_{n}\neq 0\right]}\left[\liminf\limits_{n\to\infty} v_{n}(x)\right] dx\leq \liminf\limits_{n\to\infty} \int_{[u_{n}\neq 0]}v_{n}(x) dx.
$$
In particular,
\begin{equation}\label{conclusion}
\left|\left[\liminf\limits_{n\to\infty} u_{n}\neq 0\right]\right|\leq \liminf\limits_{n\to\infty} |[u_{n}\neq 0]|.
\end{equation}
\end{enumerate}
\end{proposition}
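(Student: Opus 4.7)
My plan is to establish the three items in order, reducing (iii) to (i) via the classical Fatou Lemma. For (i), I would argue pointwise. Fix $x \in \Omega$; if $\chi_{[\liminf u_n \neq 0]}(x) = 0$ the inequality is trivial, so suppose $L := \liminf_{n} u_n(x) \neq 0$. From the definition of $\liminf$, the sequence $u_n(x)$ is eventually separated from $0$: when $L > 0$, one has $u_n(x) > L/2 > 0$ for all $n$ large, and a symmetric argument handles the case $L < 0$ (the $\pm\infty$ cases are treated analogously). Therefore $\chi_{[u_n \neq 0]}(x) = 1$ for all $n$ sufficiently large, so $\liminf_{n} \chi_{[u_n \neq 0]}(x) = 1$, which is the required bound. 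For (ii), on $[u \neq 0]$ the pointwise convergence $u_n(x) \to u(x) \neq 0$ immediately gives $u_n(x) \neq 0$ eventually, hence $\chi_{[u_n \neq 0]}(x) \to 1 = \chi_{[u \neq 0]}(x)$.

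For (iii), I would rewrite each integral using characteristic functions: $\int_{[u_n \neq 0]} v_n \, dx = \int_\Omega v_n \chi_{[u_n \neq 0]} \, dx$. Since the integrands are non-negative, the classical Fatou Lemma yields
\[
\liminf_{n \to \infty} \int_{[u_n \neq 0]} v_n \, dx \geq \int_\Omega \liminf_{n \to \infty} \bigl( v_n(x) \chi_{[u_n \neq 0]}(x) \bigr) \, dx.
\]
Combining the elementary inequality $\liminf(a_n b_n) \geq (\liminf a_n)(\liminf b_n)$ for non-negative sequences (with the convention $0 \cdot \infty = 0$) with part (i), the integrand on the right-hand side is bounded below pointwise by $\bigl( \liminf_n v_n(x) \bigr)\, \chi_{[\liminf u_n \neq 0]}(x)$. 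Integrating produces exactly the right-hand side of the claimed inequality, and the measure estimate \eqref{conclusion} follows immediately by taking $v_n \equiv 1$.

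The main obstacle I anticipate is part (i): justifying rigorously that $\liminf_n u_n(x) \neq 0$ forces $u_n(x)$ to be bounded away from $0$ for all large $n$. The positive case follows directly from the definition of $\liminf$, while the negative case requires a careful symmetric analysis of the tail behaviour of $\inf_{k \geq n} u_k(x)$; once (i) is in hand, the other two items are routine, being either a direct consequence of pointwise convergence or a combination of (i) with the classical Fatou Lemma.
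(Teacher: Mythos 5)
Your overall architecture is the same as the paper's: a pointwise argument for (i), the trivial eventual-nonvanishing argument for (ii), and (iii) obtained from the pointwise inequality plus the classical Fatou Lemma (your use of (i) there is in fact cleaner than the paper, which cites (ii) at that point even though no a.e.\ convergence is assumed). The genuine problem is exactly the step you flag as your ``main obstacle'': the case $L:=\liminf_n u_n(x)<0$ (including $L=-\infty$) in part (i). There is no symmetric argument, because $\liminf$ is not symmetric under negation --- replacing $u_n$ by $-u_n$ turns $\liminf$ into $\limsup$. A negative $\liminf$ only guarantees that $u_n(x)<L/2$ \emph{infinitely often}, i.e.\ $\limsup_n \chi_{[u_n\neq 0]}(x)=1$; it does not prevent $u_n(x)$ from vanishing infinitely often as well. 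Concretely, take $u_n\equiv -1$ for $n$ odd and $u_n\equiv 0$ for $n$ even: then $\liminf_n u_n(x)=-1\neq 0$, so the left-hand side of (i) equals $1$, while $\chi_{[u_n\neq 0]}(x)$ alternates between $1$ and $0$, so the right-hand side equals $0$. Hence no ``careful symmetric analysis of the tail'' can close the gap: item (i) as literally stated is false for sign-changing, non-convergent sequences, and with it the inequalities of (iii).

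For what it is worth, the paper's own proof stumbles on the very same point. It reduces to $|u_n|$ and, from a subsequence along which $u_{n_k}(x)=0$, concludes $0\leq |u(x)|=\liminf_n |u_n(x)|\leq \liminf_k |u_{n_k}(x)|=0$, where $u:=\liminf_n u_n$; the middle equality tacitly identifies $|\liminf_n u_n(x)|$ with $\liminf_n |u_n(x)|$, which fails in the same counterexample ($|u(x)|=1$ but $\liminf_n|u_n(x)|=0$). Two repairs are available, and your correct positive-case argument covers both: either replace $[\liminf_n u_n\neq 0]$ by $[\liminf_n |u_n|\neq 0]$ throughout (since $\liminf_n|u_n(x)|\neq 0$ does force $|u_n(x)|$ to stay eventually bounded away from $0$, this is exactly your $L>0$ case), or add the hypothesis that $u_n\to u$ a.e.\ (as in (ii)), in which case $u(x)\neq 0$ immediately gives $u_n(x)\neq 0$ eventually. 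Either version suffices for every application of the proposition in the paper, where the relevant sequences always converge a.e.\ at least along a subsequence.
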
 

\begin{proof}
$(i)$ 
Since $[w\neq 0]=[|w|\neq 0]$ for all measurable function $w$, it is sufficient to prove that 
$$
\chi_{\left[|\liminf\limits_{n\to\infty} u_{n}|\neq 0\right]}(x)\leq \liminf\limits_{n\to\infty}\chi_{[|u_{n}|\neq 0]}(x).
$$
For this, let us define $u:=\liminf\limits_{n\to\infty} u_{n}$ and $g:\Omega\to\{0, 1\}$ by 
$$
g(x)=\liminf\limits_{n\to\infty}\chi_{[|u_{n}|\neq 0]}(x).
$$
If $g\equiv 1$, there is nothing to be proven. Otherwise, it is sufficient to prove that if $g(x)=0$, then $\chi_{[|u|\neq 0]}(x)=0$. Indeed, observe that if $g(x)=0$ then there exists a subsequence $u_{n_{k}}$ where $\{n_{k}\}\subset\N$ depends on $x$, such that
$$
\chi_{[|u_{n_{k}}|\neq 0]}(x)=0, \ \forall \ k\in\N.
$$
Equivalently,
$$
|u_{n_{k}}(x)|=0, \ \forall \ k\in\N.
$$
Passing to the lower limit as $k$ goes to infinity, we obtain
$$
0\leq |u(x)|= \liminf\limits_{n\to\infty} |u_{n}(x)|\leq \liminf\limits_{k\to\infty} |u_{n_k}(x)|=0,
$$
or yet
$$
\chi_{[|u|\neq 0]}(x)=0.
$$

$(ii)$ Taking into account the previous item, it is sufficient to prove that
$$
\limsup\limits_{n\to\infty}\chi_{[u_{n}\neq 0]}(x)\leq \chi_{\left[u\neq 0\right]}(x) \ \mbox{a. e. in $[u\neq 0]$}. 
$$
Indeed, there exists a set $\hat{\Omega}\subset\Omega$ with measure zero, such that
$$
u_{n}(x)\to u(x), \ \forall \ x\in \Omega\backslash\hat{\Omega}.
$$
Thus, for each $x\in (\Omega\backslash\hat{\Omega})\cap [u\neq 0]$, there exists $n(x)$ such that if $n\geq n(x)$ then $u_{n}(x)\neq 0$, or equivalently, $\chi_{[u_{n}\neq 0]}(x)=1$ for all $n\geq n(x)$. Therefore,
$$
\limsup\limits_{n\to\infty}\chi_{[u_{n}\neq 0]}(x)=1=\chi_{\left[u\neq 0\right]}(x), \ \forall \ x\in (\Omega\backslash\hat{\Omega})\cap [u\neq 0].
$$

$(iii)$ Denote $v:=\liminf\limits_{n\to\infty} v_{n}$. It follows from item $(ii)$ and properties of lower limit that
$$
v(x)\chi_{[u\neq 0]}(x)\leq \liminf\limits_{n\to\infty} \left[v_{n}(x)\chi_{[u_{n}\neq 0]}(x)\right] \ \mbox{a.e. in $\Omega$}.
$$
By integrating last inequality and using the classical Fatou Lemma, the result follows. To conclude \eqref{conclusion}, simply choose $v_{n}=v=1$. 

\end{proof}

\medskip


\section{Topological aspects of the Nehari manifold}\label{sec:Nehari}

The main goal of this section is to study some topological features of the Nehari manifold and the behavior of the energy functional $I$ on $\mathcal{N}$.

\begin{proposition}\label{proposição1}
Suppose that $f$ verifies $(f_1)-(f_{2})$ and let $h_{u}:[0,\infty)\rightarrow \R$ be defined by
$h_{u}(t)=I(tu)$.
\begin{enumerate}
\item[$(i)$] For each $u\in \mathcal{A}$, there exists a unique $t_{u}>0$ such that $h_{u}'(t)>0$ in $(0,t_{u})$, $h_{u}'(t_{u})=0$ and $h_{u}'(t)<0$ in $(t_{u}, \infty)$. Moreover, $tu\in \mathcal{N}$ if, and only if, $t=t_{u}$;

\item[$(ii)$] for each $u\in \mathcal{A}^{c}$, $h_{u}'(t)>0$ for all $t\in (0, \infty)$.
\end{enumerate}
\end{proposition}

\begin{proof} 
($i$) First observe that $h_{u}(0)=0$. Moreover, for each $u\in \mathcal{A}$, we have
\begin{equation}
\frac{h_{u}(t)}{t^{2}}= \frac{1}{2}\|u\|^{2}-\int_{[u\neq 0]}\left[\frac{F(x, tu)}{(tu)^{2}}\right]u^{2}dx.
\end{equation}
Thus, from $(f_{1})-(f_{2})$, L'Hospital rule and Lebesgue dominated convergence theorem, it follows that
$$
\lim_{t\to 0}\frac{h_{u}(t)}{t^{2}}=\frac{1}{2}\left(\|u\|^{2}-\int_{\Omega}\alpha(x)u^{2}dx\right)>0
$$
and
$$
\lim_{t\to \infty}\frac{h_{u}(t)}{t^{2}}=\frac{1}{2}\left(\|u\|^{2}-\int_{\Omega}\eta(x)u^{2}dx\right)<0.
$$
Showing that 
$$
h_{u}(t)=\frac{h_{u}(t)}{t^{2}}t^{2}
$$
is positive for $t$ small and

$$
\lim_{t\to \infty}h_{u}(t)=\lim_{t\to \infty}\frac{h_{u}(t)}{t^{2}}t^{2}=-\infty.
$$
Since $h_{u}$ is a continuous function, previous arguments implies that there exists a global maximum point $t_{u}>0$ of $h_{u}$. Now, we are going to show that $t_{u}$ is the unique critical point of $h_{u}$. In fact, supposing that there exist $t_{1}>t_{2}>0$ such that $h_{u}'(t_{1})=h_{u}'(t_{2})=0$, we obtain
$$
0=\int_{[u\neq 0]}\left[\frac{f(x, t_{1}u)}{t_{1}u}-\frac{f(x, t_{2}u)}{t_{2}u}\right]u^{2}dx,
$$
and, by $(f_{1})$, $t_{1}=t_{2}$. The result follows.

($ii$) If $u\in \mathcal{A}^{c}$, then $\|u\|^{2}\geq \int_{\Omega}\eta(x)u^{2}dx$. Thus, it follows from $(f_{1})$ that
$$
\frac{h_{u}'(t)}{t}=\|u\|^{2}-\int_{[u\neq 0]}\frac{f(x, tu)}{tu}u^{2}dx\geq \int_{[u\neq 0]}\left[\eta(x)-\frac{f(x, tu)}{tu}\right]u^{2}dx>0, \ \forall \ t>0.
$$ 
Consequently, $h_{u}'(t)=t(h_{u}'(t)/t)>0$ for all $t\in (0,\infty)$.
\end{proof}

\medskip

\begin{remark}\label{rem1}
It is an immediate consequence of previous proposition that, for each $u\in \mathcal{A}$ and $s\in (0, \infty)$, $t_{su}=t_{u}/s$. Moreover, it is clear that $u\in \mathcal{N}$ if, and only if, $t_{u}=1$.
\end{remark}

\medskip

\begin{proposition}\label{proposition2}
If $f$ verifies $(f_{1})-(f_{2})$, the following claims hold:
\begin{enumerate}
\item[$(A_{1})$] $\tau:=\inf_{u\in \mathcal{S}_{\mathcal{A}}}t_{u}>0$;

\item[$(A_{2})$] $\zeta_\mathcal{W}:=\max_{u\in \mathcal{W}}t_{u}<\infty$, for all compact set $\mathcal{W}\subset \mathcal{S}_{\mathcal{A}}$;

\item[$(A_{3})$] Map
$\widehat{m}:\mathcal{A}\rightarrow \mathcal{N}$ given by
$\widehat{m}(u)=t_{u}u$ is continuous and
$m:=\widehat{m}_{|_{\mathcal{S}_{\mathcal{A}}}}$ is a homeomorphism between
$\mathcal{S}_{\mathcal{A}}$ and $\mathcal{N}$. Moreover, $m^{-1}(u)=u/\|u\|$.
\end{enumerate}
\end{proposition}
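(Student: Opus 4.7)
The common ingredient for all three claims is a careful passage to the limit in the Nehari identity
\[
\|u\|^{2} \;=\; \int_{[u\neq 0]} \frac{f(x,t_{u}u)}{t_{u}u}\,u^{2}\,dx,
\]
which is $h_{u}'(t_{u})=0$ rewritten. By $(f_{1})$ the ratio $f(x,s)/s$ satisfies the two-sided bound $\alpha(x)\le f(x,s)/s\le \eta(x)$, tending to $\alpha(x)$ as $s\to 0$ and to $\eta(x)$ as $|s|\to\infty$. Combined with the compact embedding $H_{0}^{1}(\Omega)\hookrightarrow L^{2}(\Omega)$---which along any $H_{0}^{1}$-bounded sequence produces a subsequence converging pointwise a.e.\ and strongly in $L^{2}$---this yields a Vitali-type dominated convergence framework sufficient to pass to the limit in every integral that appears.

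For $(A_{1})$ I argue by contradiction: if $u_{n}\in\mathcal{S}_{\mathcal{A}}$ with $t_{n}:=t_{u_{n}}\to 0$, I extract a subsequence with $u_{n}\rightharpoonup u$ in $H_{0}^{1}$, $u_{n}\to u$ in $L^{2}$ and a.e.\ in $\Omega$. Then $t_{n}u_{n}\to 0$ a.e., so $f(x,t_{n}u_{n})/(t_{n}u_{n})\to \alpha(x)$ a.e.\ on $[u\neq 0]$, and a Vitali-type limit in the identity $1=\int_{\Omega}[f(x,t_{n}u_{n})/(t_{n}u_{n})]u_{n}^{2}\,dx$ yields $1=\int_{\Omega}\alpha(x)u^{2}\,dx$. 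If $u\equiv 0$ this is absurd; otherwise the variational characterization of $\lambda_{1}(\alpha)$ together with the weak lower semicontinuity $\|u\|^{2}\le 1$ gives $1=\int_{\Omega}\alpha u^{2}\le \|u\|^{2}/\lambda_{1}(\alpha)\le 1/\lambda_{1}(\alpha)$, contradicting $\lambda_{1}(\alpha)>1$ from $(f_{2})$. For $(A_{2})$, if $t_{n}\to\infty$ with $u_{n}\in\mathcal{W}$ compact, compactness forces $u_{n}\to u\in\mathcal{W}\subset\mathcal{S}_{\mathcal{A}}$ strongly in $H_{0}^{1}$, and hence $|t_{n}u_{n}|\to\infty$ a.e.\ on $[u\neq 0]$. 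The analogous passage to the limit gives $1=\int_{\Omega}\eta(x)u^{2}\,dx$, contradicting $u\in\mathcal{S}_{\mathcal{A}}$.

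For $(A_{3})$, continuity of $\widehat{m}$ reduces, via $t_{u_{n}}=t_{u_{n}/\|u_{n}\|}/\|u_{n}\|$ from Remark \ref{rem1}, to showing $t_{v_{n}}\to t_{v}$ whenever $v_{n}\to v$ in $\mathcal{S}_{\mathcal{A}}$. Since $\mathcal{S}_{\mathcal{A}}$ is open in $\mathcal{S}$, applying $(A_{1})$--$(A_{2})$ to a compact neighborhood of $v$ traps $\{t_{v_{n}}\}$ in a compact subinterval of $(0,\infty)$; passing to the limit in the Nehari identity along any cluster point $t^{\ast}$ shows $t^{\ast}v\in\mathcal{N}$, so $t^{\ast}=t_{v}$ by the uniqueness part of the previous proposition. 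For the bijection, any $v\in\mathcal{N}$ satisfies $v/\|v\|\in\mathcal{S}_{\mathcal{A}}$ (because $\mathcal{N}\subset\mathcal{A}$ by Lemma \ref{lemma1}$(iv)$) and $t_{v/\|v\|}=\|v\|$ by Remark \ref{rem1}, so $m(v/\|v\|)=v$; injectivity follows by taking norms of $m(u_{1})=m(u_{2})$; and the explicit inverse $m^{-1}(v)=v/\|v\|$ is plainly continuous on $\mathcal{N}$ since $0\notin\mathcal{N}$.

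The main obstacle is the limit passage in the Nehari identity. Plain Lebesgue dominated convergence is not directly available because the natural pointwise majorant $|\eta|_{\infty}u_{n}^{2}$ depends on $n$; the resolution is that the compact Sobolev embedding delivers $u_{n}^{2}\to u^{2}$ in $L^{1}(\Omega)$, and combined with the uniform pointwise bound $f(x,s)/s\le |\eta|_{\infty}$ this enables the generalized (Vitali) dominated convergence argument that drives each of the three claims.
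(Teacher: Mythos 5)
Your proposal is correct and follows essentially the same route as the paper's proof: contradiction arguments for $(A_1)$ and $(A_2)$ obtained by passing to the limit in the Nehari identity, and for $(A_3)$ reduction to the sphere via Remark \ref{rem1}, trapping $\{t_{v_n}\}$ in a compact subinterval of $(0,\infty)$ by $(A_1)$--$(A_2)$, and concluding by uniqueness of $t_v$; surjectivity and the formula $m^{-1}(u)=u/\|u\|$ via $\mathcal{N}\subset\mathcal{A}$ and homogeneity, exactly as in the paper (which dismisses this part as ``immediate''). Two points of comparison. First, where you invoke Vitali-type generalized dominated convergence with the $L^1$-convergent majorant $|\eta|_{\infty}u_n^2$, the paper instead uses its Proposition \ref{Fatou}: item $(ii)$ plus dominated convergence in $(A_1)$, and item $(iii)$ (the generalized Fatou lemma) in $(A_2)$, which yields only the inequality $1=\|u\|^2\geq \int_{\Omega}\eta(x)u^2\,dx$, whereas your argument gives the stronger equality $1=\int_{\Omega}\eta(x)u^2\,dx$; both contradict $u\in\mathcal{S}_{\mathcal{A}}$, so both suffice, and your explicit remark about why plain dominated convergence is unavailable (the majorant depends on $n$) is a point the paper glosses over. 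Second, one wording slip in $(A_3)$: in the infinite-dimensional manifold $\mathcal{S}$ there are no compact neighborhoods, so ``applying $(A_1)$--$(A_2)$ to a compact neighborhood of $v$'' is not literally possible; what is needed (and what the paper implicitly does) is to apply $(A_2)$ to the compact set $\{v_n\}_{n\in\mathbb{N}}\cup\{v\}\subset\mathcal{S}_{\mathcal{A}}$, which works verbatim and does not even require the openness of $\mathcal{S}_{\mathcal{A}}$ in $\mathcal{S}$. With that one-line repair, the proof is complete.
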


\begin{proof}
($A_{1}$) Suppose there exists $\{u_{n}\}\subset \mathcal{S}_{\mathcal{A}}$ such that $t_{n}:=t_{u_{n}}\to 0$. In this case, we get $u\in H_{0}^{1}(\Omega)$ such that $u_{n}\rightharpoonup u$ in $H_{0}^{1}(\Omega)$. Observe that
\begin{equation}\label{padrao}
1=\int_{[u\neq 0]}\left[\frac{f(x, t_{n}u_{n})}{t_{n}u_{n}}\right]\chi_{[u_{n}\neq 0]}u_{n}^{2} dx + \int_{[u=0]}\left[\frac{f(x, t_{n}u_{n})}{t_{n}u_{n}}\right]\chi_{[u_{n}\neq 0]}u_{n}^{2} dx, \ \forall \ n\in\N.
\end{equation}
By $(f_1)$ and Lebesgue dominated convergence theorem,
\begin{equation}\label{zero}
\int_{[u=0]}\left[\frac{f(x, t_{n}u_{n})}{t_{n}u_{n}}\right]\chi_{[u_{n}\neq 0]}u_{n}^{2} dx\to 0.
\end{equation}
On the other hand, by $(f_1)$, Proposition \ref{Fatou}$(ii)$ and Lebesgue dominated convergence theorem
\begin{equation}\label{nonzero}
\int_{[u\neq 0]}\left[\frac{f(x, t_{n}u_{n})}{t_{n}u_{n}}\right]\chi_{[u_{n}\neq 0]}u_{n}^{2} dx\to \int_{\Omega}\textcolor{blue}{\alpha(x)}u^{2}dx.
\end{equation}
Thus, by \eqref{zero} and \eqref{nonzero}, passing to the limit as $n$ goes to infinity in \eqref{padrao}, we get
$$
1=\int_{\Omega}\alpha(x)u^{2}dx.
$$
If $\alpha=0$, we have a clear contradiction. Otherwise, the inequality
$$
1\leq (1/\lambda_{1}(\alpha))\|u\|^{2}\leq 1/\lambda_{1}(\alpha),
$$
contradicts $(f_{2})$.


($A_{2}$) Suppose there exists $\{u_{n}\}\subset \mathcal{W}$ such that $t_{n}:=t_{u_{n}}\to \infty$. Since $\mathcal{W}$ is compact, passing to a subsequence, we obtain $u\in \mathcal{W}$ such that $u_{n}\to u$ in $H_{0}^{1}(\Omega)$. Hence, passing to the lower limit as $n\to\infty$ in
$$
1=\|u_{n}\|^{2}=\int_{[u_{n}\neq 0]}\frac{f(x, t_{n}u_{n})}{t_{n}u_{n}}u_{n}^{2}dx, \ \forall \ n\in\N,
$$
it follows from Proposition \ref{Fatou}$(iii)$ that
$$
1=\|u\|^{2}\geq \int_{\Omega}\eta(x)u^{2}dx.
$$
The last inequality implies that $u\in \mathcal{S}_{\mathcal{A}}^{c}$, leading us to a contradiction since $u\in \mathcal{W}\subset\mathcal{S}_{\mathcal{A}}$.

($A_{3}$) We first show that $\widehat{m}$ is continuous. Let $\{u_{n}\}\subset \mathcal{A}$ and $u\in\mathcal{A}$, be such that $u_{n}\to u$ in $H_{0}^{1}(\Omega)$. From Remark \ref{rem1} ($\widehat{m}(tw)=\widehat{m}(w)$ for all $w\in \mathcal{A}$ and $t>0$), we can consider $\{u_{n}\}\subset S_{\mathcal{A}}$. Thus,
\begin{equation}\label{conv1}
t_{n}=t_{n}\|u_{n}\|^{2}=\int_{\Omega}f(x, t_{n}u_{n})u_{n}dx,
\end{equation}
where $t_{n}:=t_{u_{n}}$. From ($A_{1}$) and $(A_{2})$, it follows that, up to a subsequence, $t_{n}\to t>0$. Thence, passing to the limit as $n\to\infty$ in \eqref{conv1}, we have
$$
t=t\|u\|^{2}=\int_{\Omega}f(x, tu)u dx,
$$
showing that $\widehat{m}(u_{n})=t_{n}u_{n}\to tu=\widehat{m}(u)$. The second part  of ($A_{3}$) is immediate.
\end{proof}

\begin{lemma}\label{lemma3}
The functional $I$ is bounded from below on $\mathcal{N}$, more specifically,
$$
I(u)>0,
$$
for all $u\in\mathcal{N}$.
\end{lemma}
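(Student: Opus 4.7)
The plan is to use the Nehari identity to rewrite $I(u)$ in a form where the integrand is manifestly nonnegative, and then to appeal to Lemma \ref{lemma2}$(i)$ to conclude strict positivity.

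More concretely, I would first observe that every $u\in\mathcal{N}$ satisfies, by definition, $\|u\|^{2}=\int_{\Omega}f(x,u)u\,dx$. Substituting this into the expression for the energy gives
$$
I(u)=\frac{1}{2}\|u\|^{2}-\int_{\Omega}F(x,u)\,dx=\int_{\Omega}\left[\frac{1}{2}f(x,u)u-F(x,u)\right]dx.
$$
So the lemma reduces to showing that the right-hand side is strictly positive whenever $u\not\equiv 0$.

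Next, I would apply Lemma \ref{lemma2}$(i)$: the map $t\mapsto (1/2)f(x,t)t-F(x,t)$ is increasing on $(0,\infty)$ and decreasing on $(-\infty,0)$, a.e.\ in $\Omega$. Since this map vanishes at $t=0$, it follows that
$$
\frac{1}{2}f(x,t)t-F(x,t)>0\quad\text{for all }t\neq 0,\ \text{a.e.\ in }\Omega,
$$
and is zero at $t=0$. Hence the integrand above is nonnegative a.e.\ in $\Omega$ and strictly positive a.e.\ on the set $[u\neq 0]$.

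Finally, since $u\in\mathcal{N}$ implies $u\neq 0$ in $H_{0}^{1}(\Omega)$, the set $[u\neq 0]$ has positive Lebesgue measure. Therefore
$$
I(u)=\int_{[u\neq 0]}\left[\frac{1}{2}f(x,u)u-F(x,u)\right]dx>0,
$$
which proves the claim. There is essentially no obstacle here: the entire argument is a direct consequence of the Nehari identity combined with the pointwise monotonicity provided by Lemma \ref{lemma2}$(i)$; the only small point worth being explicit about is that $u\neq 0$ as an element of $H_{0}^{1}(\Omega)$ means $|[u\neq 0]|>0$, so the a.e.\ strict inequality on $[u\neq 0]$ is not washed out by the integration.
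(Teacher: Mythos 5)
Your proof is correct and follows essentially the same route as the paper: both use the Nehari identity to write $I(u)=\int_{\Omega}\bigl[\tfrac{1}{2}f(x,u)u-F(x,u)\bigr]dx$ and then invoke the pointwise positivity of the integrand from Lemma \ref{lemma2}. The only cosmetic differences are that the paper parametrizes $\mathcal{N}$ as $\{t_{u}u:u\in\mathcal{S}_{\mathcal{A}}\}$ and cites item $(iii)$ of Lemma \ref{lemma2} (which is just the statement you re-derive from item $(i)$), while you work directly on $\mathcal{N}$ and make explicit the observation that $|[u\neq 0]|>0$ guarantees strict positivity.
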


\begin{proof}
For any $u\in \mathcal{S}_{\mathcal{A}}$, we get
$$
I(t_{u}u)=\int_{\Omega}\left[\frac{1}{2}\frac{f(x, t_{u}u)}{t_{u}u}-\frac{F(x, t_{u}u)}{(t_{u}u)^{2}}\right](t_u u)^{2}dx.
$$
The result follows from Lemma \ref{lemma2}$(iii)$.
\end{proof}

Now we set the maps $\widehat{\Psi}:\mathcal{A}\rightarrow\mathbb{R} \ \mbox{and} \ \Psi:\mathcal{S}_{\mathcal{A}}\rightarrow \mathbb{R}$, by 
$$
\widehat{\Psi}(u)=I(\widehat{m}(u)) \ \mbox{and} \ \Psi:=\widehat{\Psi}_{|_{\mathcal{S_{\mathcal{A}}}}}.
$$ 

In the next result we present some properties of $\widehat{\Psi}$ and $\Psi$. The proof of such a result is a consequence of Proposition \ref{proposition2} and the details can be found in \cite{SW}. 



\begin{proposition}\label{proposition3}
Suppose that $f$ verifies $(f_{1})-(f_{2})$. Then,
\begin{enumerate}
\item[$(i)$] $\widehat{\Psi}\in C^{1}(\mathcal{A}, \mathbb{R})$ and
$$
\widehat{\Psi}'(u)v=\frac{\|\widehat{m}(u)\|}{\|u\|}I'(\widehat{m}(u))v,
\ \forall u\in \mathcal{A} \ \mbox{and} \ \forall v\in H_{0}^{1}(\Omega).
$$

\item[$(ii)$] $\Psi\in C^{1}(\mathcal{S}_{\mathcal{A}}, \mathbb{R})$ and
$$
\Psi'(u)v=\|m(u)\|I'(m(u))v, \ \forall v\in T_{u}\mathcal{S}_{\mathcal{A}}.
$$

\item[$(iii)$] If $\{u_{n}\}$ is a $(PS)_{c}$ sequence for $\Psi$ then  $\{m(u_{n})\}$ is a $(PS)_{c}$ sequence for $I$.
If $\{u_{n}\}\subset \mathcal{N}$ is a bounded $(PS)_{c}$ sequence for $I$ then $\{m^{-1}(u_{n})\}$ is a $(PS)_{c}$ sequence for $\Psi$.

\item[$(iv)$] $u$ is a critical point of $\Psi$ if, and only if, $m(u)$ is a nontrivial critical point of $I$. Moreover,
$$
c_{\mathcal{N}}:=\inf_{u\in \mathcal{N}}I(u)=\inf_{u\in \mathcal{A}}\max_{t>0}I(tu)=\inf_{u\in\mathcal{S}_{\mathcal{A}}}\max_{t>0}I(tu)=\inf_{u\in\mathcal{S}_{\mathcal{A}}}\Psi(u).
$$
\end{enumerate}
\end{proposition}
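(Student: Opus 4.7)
My plan is to follow the approach of Szulkin--Weth, adapted to our non-complete open submanifold $\mathcal{S}_{\mathcal{A}}$, using heavily the outputs of Proposition \ref{proposition2}: the continuity of $\widehat{m}$, the positivity $t_u\ge\tau$ on $\mathcal{S}_{\mathcal{A}}$, and the identity $\widehat{m}(tu)=\widehat{m}(u)$ for $t>0$.

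For item $(i)$, I would first show that $\widehat{\Psi}$ is Gateaux differentiable at any $u\in\mathcal{A}$ with the stated derivative, and then upgrade to $C^1$. Fix $u\in\mathcal{A}$ and $v\in H_0^1(\Omega)$; since $\mathcal{A}$ is open, $u+hv\in\mathcal{A}$ for small $h$. Using that $t_u$ is the \emph{unique} maximizer of $s\mapsto I(su)$ from Proposition \ref{proposi\c c\~ao1}$(i)$, I would sandwich
\[
I(t_{u+hv}(u+hv))-I(t_{u+hv}u)\le \widehat{\Psi}(u+hv)-\widehat{\Psi}(u)\le I(t_u(u+hv))-I(t_u u),
\]
apply the mean value theorem on each side, let $h\to 0$, and invoke the continuity of $u\mapsto t_u=\widehat{m}(u)/\|u\|$ from $(A_3)$ to get
\[
\widehat{\Psi}'(u)v=I'(t_u u)(t_u v)=\frac{\|\widehat{m}(u)\|}{\|u\|}I'(\widehat{m}(u))v,
\]
which is continuous in $u$ because $\widehat{m}$ and $I'$ are. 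This sandwich trick is the key device that bypasses any direct regularity statement for $\widehat{m}$, which is the technical crux.

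For item $(ii)$, since $\mathcal{S}_{\mathcal{A}}$ is a relatively open subset of the $C^1$ sphere $\mathcal{S}$, it inherits its $C^1$-structure; the formula then follows from $(i)$ by restricting $v$ to $T_u\mathcal{S}_{\mathcal{A}}=\{v\in H_0^1(\Omega):\langle u,v\rangle=0\}$, where $\|u\|=1$. For item $(iii)$, given a $(PS)_c$ sequence $\{u_n\}\subset\mathcal{S}_{\mathcal{A}}$ for $\Psi$, I decompose any $w\in H_0^1(\Omega)$ as $w=\langle w,u_n\rangle u_n+v_n$ with $v_n\in T_{u_n}\mathcal{S}_{\mathcal{A}}$ and $\|v_n\|\le\|w\|$. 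Since $m(u_n)\in\mathcal{N}$ gives $I'(m(u_n))m(u_n)=0$, and $m(u_n)=t_{u_n}u_n$, I obtain $I'(m(u_n))u_n=0$, so
\[
I'(m(u_n))w=I'(m(u_n))v_n=\frac{1}{\|m(u_n)\|}\Psi'(u_n)v_n.
\]
Because $\|m(u_n)\|=t_{u_n}\ge\tau>0$ by $(A_1)$, this yields $\|I'(m(u_n))\|_*\le\tau^{-1}\|\Psi'(u_n)\|_*\to 0$. The converse is immediate from $(ii)$ using boundedness of $\{u_n\}$: for $u_n\in\mathcal{N}$, $m^{-1}(u_n)=u_n/\|u_n\|$, so $\Psi(m^{-1}(u_n))=I(u_n)\to c$ and $\|\Psi'(m^{-1}(u_n))\|_*\le\|u_n\|\,\|I'(u_n)\|_*\to 0$.

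For item $(iv)$, if $u$ is critical for $\Psi$, then $(ii)$ gives $I'(m(u))v=0$ for all $v\in T_u\mathcal{S}_{\mathcal{A}}$, and the Nehari identity $I'(m(u))m(u)=0$ covers the remaining direction, so $I'(m(u))=0$; the converse is trivial. The chain of infimum identities follows by noticing that $\widehat{\Psi}$ is $0$-homogeneous (since $\widehat{m}(su)=\widehat{m}(u)$ for $s>0$ by Remark \ref{rem1}), so the infimum of $\widehat{\Psi}$ over $\mathcal{A}$ equals the infimum over $\mathcal{S}_{\mathcal{A}}$; together with $\max_{t>0}I(tu)=I(\widehat{m}(u))=\widehat{\Psi}(u)$ and the fact that $m:\mathcal{S}_{\mathcal{A}}\to\mathcal{N}$ is a homeomorphism, all four expressions coincide with $c_{\mathcal{N}}$. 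The main obstacle I anticipate is a clean verification of the $C^1$-regularity in $(i)$ (rather than mere Gateaux differentiability); the sandwich argument above, together with continuity of $\widehat{m}$ from $(A_3)$, is precisely what makes the derivative continuous in $u$.
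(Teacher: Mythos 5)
Your proposal follows exactly the route the paper itself takes: the paper gives no written proof of this proposition, deferring the details to Szulkin--Weth \cite{SW}, and what you have written out is precisely that argument (the fibering-map sandwich for the Gateaux derivative, the tangent-space decomposition for the $(PS)$ transfer, and the $0$-homogeneity of $\widehat{\Psi}$ for the infimum identities), with all the inputs you invoke --- continuity of $\widehat{m}$, the bound $t_u\geq\tau$, and $\widehat{m}(su)=\widehat{m}(u)$ --- being exactly what Proposition \ref{proposition2} supplies.

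One slip needs fixing: the sandwich inequalities in your item $(i)$ are written backwards. Since $\widehat{\Psi}(u+hv)=\max_{t>0}I(t(u+hv))\geq I(t_u(u+hv))$ and $\widehat{\Psi}(u)=I(t_uu)\geq I(t_{u+hv}u)$, maximality gives
\begin{equation*}
I(t_u(u+hv))-I(t_uu)\;\leq\;\widehat{\Psi}(u+hv)-\widehat{\Psi}(u)\;\leq\;I(t_{u+hv}(u+hv))-I(t_{u+hv}u),
\end{equation*}
that is, the \emph{frozen} parameter $t_u$ produces the lower bound and the \emph{moving} parameter $t_{u+hv}$ the upper bound. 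As you wrote it, each inequality is the reverse of what maximality yields (your left-hand inequality amounts to asserting $\widehat{\Psi}(u)\leq I(t_{u+hv}u)$, which is false in general). With the directions corrected, the remainder of your argument goes through unchanged: the mean value theorem on each side, $t_{u+hv}\to t_u$ by $(A_3)$, and the squeeze to the common limit $t_uI'(t_uu)v$. Items $(ii)$--$(iv)$ as you present them are sound; in particular, the lower bound $\|m(u_n)\|=t_{u_n}\geq\tau>0$ from $(A_1)$ is indeed the key point that makes the $(PS)$ transfer work on the noncomplete manifold $\mathcal{S}_{\mathcal{A}}$, and the boundedness hypothesis in the converse direction is exactly where $\|u_n\|\,\|I'(u_n)\|_*\to0$ is needed.
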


\begin{remark}\label{rem2}
It is a consequence of Lemma \ref{lemma3} that $c_{\mathcal{N}}\geq 0$. Moreover, if $c_{\mathcal{N}}$ is achieved then it is positive.
\end{remark}

Since $\mathcal{S}_{\mathcal{A}}$ can be non-complete, we need to be very careful about the behaviour of minimizing sequences for $\Psi$ near the boundary. Next result helps us in this issue.

\begin{proposition}\label{main1}
Suppose $(f_{1})-(f_{2})$ hold. If $\{u_{n}\}\subset \mathcal{S}_{\mathcal{A}}$ is such that $dist(u_{n}, \partial \mathcal{S}_{\mathcal{A}})\to 0$, then there exists $u\in H_{0}^{1}(\Omega)\backslash\{0\}$ such that $u_{n}\rightharpoonup u$ in $H_{0}^{1}(\Omega)$, $t_{u_{n}}\to\infty$ and 
$$
\liminf\limits_{n\to\infty}\Psi(u_{n})\geq\int_{[u\neq 0]}\beta(x) dx,
$$ 
where $\beta(x)$ is defined in Remark \ref{rem0}. 
\end{proposition}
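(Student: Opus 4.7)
The plan is to pass to a weakly convergent subsequence of $\{u_n\}$, exploit the proximity to $\partial\mathcal{S}_{\mathcal{A}}$ to show the weak limit is nontrivial, then deduce $t_{u_n}\to\infty$ from the Nehari identity by contradiction, and finally estimate $\liminf_n\Psi(u_n)$ via a Fatou-type inequality.

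First, since $\|u_n\|=1$, I would extract a subsequence along which $u_n\rightharpoonup u$ in $H_0^1(\Omega)$, $u_n\to u$ in $L^2(\Omega)$, and $u_n\to u$ a.e. in $\Omega$. The set $\partial\mathcal{S}_{\mathcal{A}}=\{w\in\mathcal{S}:\int_\Omega\eta(x) w^2\,dx=1\}$ is closed in $H_0^1(\Omega)$, and the hypothesis $\dist(u_n,\partial\mathcal{S}_{\mathcal{A}})\to 0$ lets me pick $v_n\in\partial\mathcal{S}_{\mathcal{A}}$ with $\|u_n-v_n\|\to 0$. Then $v_n\to u$ in $L^2(\Omega)$, and passing to the limit in $\int_\Omega\eta(x) v_n^2\,dx=1$ (continuous with respect to $L^2$-convergence since $\eta\in L^\infty(\Omega)$) yields $\int_\Omega\eta(x) u^2\,dx=1$, in particular $u\neq 0$.

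Next I would show $t_n:=t_{u_n}\to\infty$ by contradiction. Assume along a subsequence $t_n\to t^*\in[\tau,\infty)$ (the lower bound coming from Proposition \ref{proposition2}$(A_1)$). The Nehari identity for $t_n u_n\in\mathcal{N}$ reads
$$
1=\int_{[u_n\neq 0]}\frac{f(x,t_n u_n)}{t_n u_n}u_n^2\,dx.
$$
Splitting over $[u=0]$ and $[u\neq 0]$, the piece on $[u=0]$ is controlled by $|\eta|_\infty\int_{[u=0]}u_n^2\,dx\to 0$. On $[u\neq 0]$, Proposition \ref{Fatou}$(ii)$ guarantees $\chi_{[u_n\neq 0]}\to 1$ a.e., and Lebesgue dominated convergence (with uniform bound $|\eta|_\infty$ and $u_n^2\to u^2$ in $L^1$) produces the limit
$$
1=\int_{[u\neq 0]}\frac{f(x,t^*u)}{t^*u}u^2\,dx.
$$
The strict monotonicity in $(f_1)$ gives $f(x,t^*u)/(t^*u)<\eta(x)$ a.e. on $[u\neq 0]$, so the right-hand side is strictly less than $\int_{[u\neq 0]}\eta(x) u^2\,dx=1$, a contradiction.

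For the final estimate, using $t_n u_n\in\mathcal{N}$ I would rewrite $\Psi(u_n)=\int_\Omega G(x,t_n u_n)\,dx$ with $G(x,t):=\tfrac12 f(x,t)t-F(x,t)$, which is nonnegative by Lemma \ref{lemma2}$(i)$. Since $t_n\to\infty$ and $u_n(x)\to u(x)\neq 0$ a.e. on $[u\neq 0]$, I have $|t_n u_n(x)|\to\infty$ there, so by Remark \ref{rem0} $G(x,t_n u_n)\to\beta(x)$ a.e. on $[u\neq 0]$. The classical Fatou lemma applied to the nonnegative integrand then yields
$$
\liminf_{n\to\infty}\Psi(u_n)\geq\int_\Omega\liminf_{n\to\infty}G(x,t_n u_n)\,dx\geq\int_{[u\neq 0]}\beta(x)\,dx.
$$
The most delicate step is the divergence $t_n\to\infty$: upgrading the Nehari equality into a strict inequality on $[u\neq 0]$ relies on the strict monotonicity in $(f_1)$, and Proposition \ref{Fatou}$(ii)$ is what justifies the dominated-convergence step on $[u\neq 0]$.
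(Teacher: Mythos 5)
Your proof is correct and follows essentially the same route as the paper's: nontriviality of the weak limit via the boundary-proximity estimate $\int_\Omega\eta(x)u^2dx=1$, divergence $t_{u_n}\to\infty$ by contradiction through the Nehari identity split over $[u=0]$ and $[u\neq 0]$ together with the strict inequality $f(x,s)/s<\eta(x)$, and a Fatou-type lower bound at the end. The only (harmless) deviation is in the last step: where the paper invokes its Proposition \ref{Fatou}$(iii)$ over the varying domains $[u_n\neq 0]$, you apply the classical Fatou lemma on all of $\Omega$ to the nonnegative integrand $G(x,t_nu_n)$, which is legitimate precisely because $G\geq 0$ and $G(x,0)=0$, so the two arguments are equivalent.
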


\begin{proof}

Since $\{u_{n}\}\subset \mathcal{S}_{\mathcal{A}}$ is bounded, up to a subsequence, there exists $u\in H_{0}^{1}(\Omega)$ with $u_{n}\rightharpoonup u$ in $H_{0}^{1}(\Omega)$. Since $dist(u_{n}, \partial \mathcal{S}_{\mathcal{A}})\to 0$, there exists $\{z_{n}\}\subset \mathcal{S}_{\mathcal{A}}$ such that $\|u_{n}-z_{n}\|\to 0$ as $n\to\infty$. Thus,
\begin{eqnarray*}
\left| \int_{\Omega}\eta(x)u_{n}^{2}dx-1\right|&=&\left| \int_{\Omega}\eta(x)(u_{n}^{2}-z_{n}^{2})dx\right|\\
&\leq& |\eta|_{\infty}|u_{n}+z_{n}|_{2}|u_{n}-z_{n}|_{2}\\
&\leq&  (2|\eta|_{\infty}/\lambda_{1})\|u_{n}-z_{n}\|.
\end{eqnarray*}
Therefore, 
$$
\int_{\Omega}\eta(x)u_{n}^{2}dx\to 1.
$$

By using compact embedding from $H_{0}^{1}(\Omega)$ in $L^{2}(\Omega)$, it follows that
\begin{equation}\label{equal}
1=\int_{\Omega}\eta(x)u^{2}dx.
\end{equation}
Thus $u\neq 0$. Suppose by contradiction that, for some subsequence, $\{t_{u_{n}}\}$ is bounded. In this case, passing again to a subsequence, there exists $t_{0}> 0$ (see Proposition \ref{proposition2}$(A_{1})$) such that 
\begin{equation}\label{conv2}
t_{u_{n}}\to t_{0}.
\end{equation} 
It follows from \eqref{conv2} and
$$
t_n=\int_{\Omega}f(x, t_{u_{n}}u_{n})u_{n} dx, \ \forall \ n\in\N,
$$
that
$$
1=\int_{\Omega}\frac{f(x, t_{0}u)}{t_{0}u}u^{2}dx.
$$
Combining last equality and $(f_{1})$, we have
$$
1<\int_{\Omega}\eta(x)u^{2}dx.
$$
But the previous inequality contradicts \eqref{equal}. Showing that $t_{u_{n}}\to\infty$. Since the behaviour at infinity of $\{t_{u_{n}}u_{n}\}$ is indefinite in $[u=0]$, we cannot use the standard Fatou Lemma. Instead, we will use Proposition \ref{Fatou}$(iii)$. Thence,
\begin{eqnarray*}
\liminf\limits_{n\to\infty}\Psi(u_{n})&=& \liminf\limits_{n\to\infty}\int_{\Omega}\left[ \frac{1}{2}f(x, t_{u_{n}}u_{n})t_{u_{n}}u_{n}-F(x, t_{u_{n}}u_{n})\right]dx\\
&=&\liminf\limits_{n\to\infty}\int_{[u_{n}\neq 0]}\left[ \frac{1}{2}f(x, t_{u_{n}}u_{n})t_{u_{n}}u_{n}-F(x, t_{u_{n}}u_{n})\right]dx\\
&\geq & \int_{[u\neq 0]}\beta(x) dx.
\end{eqnarray*}
\end{proof}

\begin{lemma}\label{level}
Suppose $f$ satisfies $(f_{1})-(f_{2})$ and \eqref{ttttt}. Then
$$
0\leq c_{\mathcal{N}}<\inf_{u\in \partial\mathcal{S}_{\mathcal{A}}}\int_{[u\neq 0]}\beta(x)dx.
$$ 

%
\end{lemma}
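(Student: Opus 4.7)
The lower bound $c_{\mathcal{N}}\ge 0$ is immediate from Remark \ref{rem2}. For the strict upper bound, the plan is to produce (i) an upper bound on $c_{\mathcal{N}}$ involving $\tau^{2}/[2\lambda_{1}(\eta-\alpha)]$ by testing the variational characterisation $c_{\mathcal{N}}=\inf_{u\in\mathcal{S}_{\mathcal{A}}}\Psi(u)$ with the Nehari-point of an arbitrary $u\in\mathcal{S}_{\mathcal{A}}$, and (ii) a lower bound on the right-hand side $\inf_{u\in\partial\mathcal{S}_{\mathcal{A}}}\int_{[u\neq 0]}\beta\,dx$ via the area estimate in Lemma \ref{limita}; then close the gap between the two using \eqref{ttttt}.

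For (i), fix $u\in\mathcal{S}_{\mathcal{A}}$. Since $\widehat m(u)=t_{u}u\in\mathcal{N}$ one has $I'(t_{u}u)(t_{u}u)=0$, so
$$
\Psi(u)=I(t_{u}u)=I(t_{u}u)-\tfrac{1}{2}I'(t_{u}u)(t_{u}u)=\int_{\Omega}\!\left[\tfrac{1}{2}f(x,t_{u}u)t_{u}u-F(x,t_{u}u)\right]dx.
$$
The monotonicity of $t\mapsto f(x,t)/|t|$ in $(f_{1})$ with limits $\alpha(x)$ at $0$ and $\eta(x)$ at $\infty$ gives, for every $s$, the pointwise inequalities $f(x,s)s\le\eta(x)s^{2}$ and $F(x,s)\ge\tfrac{1}{2}\alpha(x)s^{2}$. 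Hence the integrand is dominated by $\tfrac{1}{2}(\eta(x)-\alpha(x))s^{2}$, which applied at $s=t_{u}u$ combined with the Rayleigh characterization of $\lambda_{1}(\eta-\alpha)$ yields
$$
\Psi(u)\le\frac{t_{u}^{2}}{2}\int_{\Omega}(\eta-\alpha)u^{2}dx\le\frac{t_{u}^{2}}{2\lambda_{1}(\eta-\alpha)}.
$$
Taking the infimum over $u\in\mathcal{S}_{\mathcal{A}}$ and using $\tau=\inf_{u\in\mathcal{S}_{\mathcal{A}}}t_{u}>0$ (Proposition \ref{proposition2}$(A_{1})$) together with Proposition \ref{proposition3}$(iv)$ gives
$$
c_{\mathcal{N}}\le\frac{\tau^{2}}{2\lambda_{1}(\eta-\alpha)}.
$$

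For (ii), Lemma \ref{limita} provides the uniform lower bound $|[u\neq 0]|\ge(S(\Omega)/|\eta|_{\infty})^{N/2}$ for every $u\in\partial\mathcal{S}_{\mathcal{A}}$. Since $\beta(x)\ge\essinf_{\Omega}\beta$ a.e.\ in $\Omega$, this yields
$$
\inf_{u\in\partial\mathcal{S}_{\mathcal{A}}}\int_{[u\neq 0]}\beta(x)\,dx\ge\essinf_{x\in\Omega}\beta(x)\cdot\left(\frac{S(\Omega)}{|\eta|_{\infty}}\right)^{N/2}.
$$

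Finally, the hypothesis \eqref{ttttt} is exactly what is needed to close the gap between the upper bound in (i) and the lower bound in (ii) strictly: after rearrangement, \eqref{ttttt} implies that the right-hand side of the displayed area estimate strictly exceeds $\tau^{2}/[2\lambda_{1}(\eta-\alpha)]$, and hence strictly exceeds $c_{\mathcal{N}}$. The main technical point is matching the constants in Step (i) with those in \eqref{ttttt}: the bound $\Psi(u)\le t_u^{2}/[2\lambda_{1}(\eta-\alpha)]$ must be combined precisely with the H\"older--Sobolev bound from Lemma \ref{limita}, and this is where the specific powers of $|\eta|_{\infty}$ and $S(\Omega)^{N/2}$ appearing in \eqref{ttttt} originate.
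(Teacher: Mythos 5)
Your proof is correct and follows the paper's argument exactly: the same upper bound $c_{\mathcal{N}}\le\Psi(u)\le t_{u}^{2}/[2\lambda_{1}(\eta-\alpha)]$ obtained from the pointwise consequences of $(f_{1})$ plus the Rayleigh characterization of $\lambda_{1}(\eta-\alpha)$, the same lower bound on $\int_{[u\neq 0]}\beta(x)\,dx$ over $\partial\mathcal{S}_{\mathcal{A}}$ via Lemma \ref{limita}, and the same use of \eqref{ttttt} to separate the two quantities. The only cosmetic difference is that you pass explicitly to the infimum $\tau=\inf_{u\in\mathcal{S}_{\mathcal{A}}}t_{u}$ before invoking \eqref{ttttt}, a step the paper leaves implicit.
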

\begin{proof}

It follows from $(f_{1})$ that, for each $u\in\mathcal{S}_{\mathcal{A}}$,
\begin{eqnarray}\nonumber\label{uuuuu}
c_{\mathcal{N}}\leq\Psi(u)&=&\int_{\Omega}\left[\frac{f(x,m(u))}{2 m(u)}-\frac{F(x, m(u))}{m(u)^{2}}\right]m(u)^{2}dx\\ \nonumber
&<&(1/2)\int_{\Omega}[\eta(x)-\alpha(x)]m(u)^{2} dx\\
&\leq & [1/2\lambda_{1}(\eta-\alpha)]t_{u}^{2}.
\end{eqnarray}

On the other hand, by \eqref{limita}, for each $u\in\partial\mathcal{S}_{\mathcal{A}}$
\begin{equation}\label{vvvvv}
\int_{[u\neq 0]}\beta(x) dx\geq |[u\neq 0]|\essinf\limits_{x\in\Omega}\beta(x)\geq \left( S(\Omega)/|\eta|_{\infty}\right)^{N/2}\essinf\limits_{x\in\Omega}\beta(x).
\end{equation}
The result follows now from \eqref{ttttt}, \eqref{uuuuu} and \eqref{vvvvv}.


\end{proof}

\begin{proposition}\label{main2}
Suppose $(f_{1})-(f_{2})$ and \eqref{ttttt} to hold. Then $\Psi$ satisfies the $(PS)_{c}$ condition in $\mathcal{S}_{\mathcal{A}}$, for all $c\in [c_{\mathcal{N}}, \inf_{u\in \partial\mathcal{S}_{\mathcal{A}}}\int_{[u\neq 0]}\beta(x) dx)$.
\end{proposition}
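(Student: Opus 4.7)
Let $\{u_n\} \subset \mathcal{S}_{\mathcal{A}}$ be a $(PS)_c$ sequence with $c$ in the stated range. Since $\|u_n\|=1$, up to a subsequence $u_n \rightharpoonup u$ in $H_0^1(\Omega)$. My plan has two main stages: (a) show that $\{u_n\}$ cannot accumulate on $\partial \mathcal{S}_{\mathcal{A}}$, and (b) use Proposition \ref{proposition3}(iii) to reduce to a bounded $(PS)_c$ sequence for $I$ on $\mathcal{N}$ and extract strong convergence.

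For (a), I argue by contradiction. Assume $\dist(u_n, \partial \mathcal{S}_{\mathcal{A}}) \to 0$ along a subsequence; then Proposition \ref{main1} (and inspection of its proof) yields $u \neq 0$, $t_{u_n} \to \infty$, $\int_\Omega \eta u^2\, dx = 1$, and $\liminf_n \Psi(u_n) \geq \int_{[u\neq 0]} \beta\,dx$. The decisive step is to upgrade weak to strong convergence by proving $\|u\|=1$: once this is established, $u_n \to u$ in $H_0^1$, $u \in \partial \mathcal{S}_{\mathcal{A}}$, and $\int_{[u\neq 0]} \beta \geq \inf_{\partial \mathcal{S}_{\mathcal{A}}} \int \beta > c$, contradicting $\Psi(u_n)\to c$. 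To obtain $\|u\|=1$, I will test $\Psi'(u_n) \to 0$ against the tangent vectors $v_n := u - \langle u_n, u\rangle u_n \in T_{u_n}\mathcal{S}_{\mathcal{A}}$, which are bounded and converge to $(1-\|u\|^2)u$ in $L^2(\Omega)$. By Proposition \ref{proposition3}(ii), this gives $t_{u_n}\int f(x,t_{u_n} u_n) v_n\, dx \to 0$. Dividing by $t_{u_n}^2$ and evaluating the limit via $(f_1)$, Proposition \ref{Fatou}, and dominated convergence---the key point being that $f(t_{u_n} u_n)/(t_{u_n} u_n) \to \eta$ a.e.\ on $[u\neq 0]$ while on $[u=0]$ the factor $u_n v_n$ pushes the integrand to zero---the left-hand side converges to $1-\|u\|^2$, forcing $\|u\|^2=1$.

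For (b), with $\dist(u_n, \partial \mathcal{S}_{\mathcal{A}}) \geq \delta > 0$ secured, the same tangent-space test combined with the Nehari identity $1 = \int (f(t_{u_n} u_n)/(t_{u_n} u_n))\, u_n^2\, \chi_{[u_n\neq 0]}\,dx$ rules out $t_{u_n}\to\infty$: otherwise the identity would give $\int \eta u^2 = 1$, then $\|u\|=1$ as above, producing strong convergence to $u\in\partial\mathcal{S}_{\mathcal{A}}$ in violation of the distance bound. So, along a subsequence, $t_{u_n}\to t_0\in[\tau,\infty)$, with $\tau>0$ from Proposition \ref{proposition2}$(A_1)$. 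Then $w_n := m(u_n) = t_{u_n} u_n$ is bounded in $H_0^1$ and, by Proposition \ref{proposition3}(iii), is a $(PS)_c$ sequence for $I$. Standard asymptotically linear arguments---testing $I'(w_n)(w_n-w)\to 0$ and passing to the limit in $\int f(x,w_n)(w_n-w)\,dx\to 0$ via the compact embedding $H_0^1(\Omega)\hookrightarrow L^2(\Omega)$---deliver $w_n\to w$ strongly in $H_0^1(\Omega)$. Hence $u_n = w_n/t_{u_n}\to w/t_0$ strongly, and the distance bound places the limit in $\mathcal{S}_{\mathcal{A}}$, completing the proof.

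The main obstacle is stage (a): extracting $\|u\|=1$ from the $(PS)$ condition when $\{u_n\}$ hugs the non-complete boundary $\partial \mathcal{S}_{\mathcal{A}}$. The judicious tangent-space test vector $v_n = u - \langle u_n, u\rangle u_n$ together with a careful asymptotic analysis of $f(t_{u_n} u_n)/t_{u_n}$ on the disjoint regions $[u=0]$ and $[u\neq 0]$---controlled by $(f_1)$ and Proposition \ref{Fatou}---is the essential technical ingredient.
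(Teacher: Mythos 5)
Your proof is correct, but it takes a genuinely different route from the paper's. The paper first reduces matters to the $(PS)_c$ condition for $I$ on $\mathcal{N}$ (via Propositions \ref{proposition2}$(A_3)$ and \ref{proposition3}$(iii)$) and concentrates the effort on boundedness: assuming $\|u_n\|\to\infty$ for a $(PS)_c$ sequence in $\mathcal{N}$, it normalizes $v_n=u_n/\|u_n\|$, shows the weak limit $v$ is nonzero, and, dividing $I'(u_n)w=o_n(1)$ by $\|u_n\|$ for \emph{arbitrary} $w$, derives the limit equation $\int_\Omega\nabla v\nabla w\,dx=\int_\Omega\eta(x)vw\,dx$; it then runs a spectral dichotomy: if $1$ is not an eigenvalue of the $\eta$-weighted problem, $v=0$ is an immediate contradiction, while in the resonant case an energy blow-up argument ($\Psi(v_n)\to\infty$ whenever $\int_\Omega\eta v^2\,dx<1$) forces $v\in\partial\mathcal{S}_{\mathcal{A}}$ with $v_n\to v$ strongly, so Proposition \ref{main1} contradicts $c<\inf_{u\in\partial\mathcal{S}_{\mathcal{A}}}\int_{[u\neq 0]}\beta\,dx$; afterwards, strong convergence of the bounded sequence follows from the Nehari constraint and dominated convergence. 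You never derive the limit equation and never distinguish resonant from non-resonant cases: your splitting is geometric (sequences hugging $\partial\mathcal{S}_{\mathcal{A}}$ versus sequences at distance $\geq\delta$), and in both regimes the decisive work is done by the tangent-space test vector $v_n=u-\langle u_n,u\rangle u_n$, which converts $\Psi'(u_n)v_n\to 0$ into $(1-\|u\|^2)\int_\Omega\eta u^2\,dx=0$ and hence $\|u\|=1$, once $\int_\Omega\eta u^2\,dx=1$ is known (supplied by Proposition \ref{main1} in your stage (a), by the Nehari identity in stage (b)). I checked your key limit: $\Psi'(u_n)v_n=-t_{u_n}\int_\Omega f(x,t_{u_n}u_n)v_n\,dx$ because the gradient term vanishes, on $[u=0]$ the integrand is killed by $|u_n|_{L^2([u=0])}\to 0$, and on $[u\neq 0]$ one has $f(x,t_{u_n}u_n)/(t_{u_n}u_n)\to\eta$ a.e.\ together with Proposition \ref{Fatou}$(ii)$ and generalized dominated convergence; the computation is sound. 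What each approach buys: the paper's route makes the role of resonance explicit and needs no cleverly chosen test vectors, only arbitrary test functions plus linear spectral theory; yours is more unified (no case analysis on whether $1$ is a weighted eigenvalue), stays entirely on the manifold $\mathcal{S}_{\mathcal{A}}$, and is slightly more robust at the endpoint $c=c_{\mathcal{N}}$, since in your stage (b) the limit automatically has norm $t_0\geq\tau>0$, whereas the paper's reduction to $\mathcal{N}$ tacitly requires the limit of the $(PS)$ sequence for $I$ to be nonzero before $m^{-1}$ can be applied.
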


\begin{proof} 
By Proposition \ref{proposition2}$(A_{3})$ and Proposition \ref{proposition3}$(iii)$, it is sufficient to show that $I$ satisfies the $(PS)_{c}$ condition on $\mathcal{N}$ for $c\in [c_{\mathcal{N}}, \inf_{u\in\partial\mathcal{S}_{\mathcal{A}}}\int_{[u\neq 0]}\beta(x) dx)$. 

For this, let $\{u_{n}\}\subset \mathcal{N}$ be a $(PS)_{c}$ sequence for $I$. We are going to prove that $\{u_{n}\}$ is bounded in $H_{0}^{1}(\Omega)$. In fact, suppose by contradiction that, up to a subsequence, $\|u_{n}\|\to\infty$. Define $v_{n}:=u_{n}/\|u_{n}\|=m^{-1}(u_{n})\in \mathcal{S}_{\mathcal{A}}$. Thus $\{v_{n}\}$ is bounded in $H_{0}^{1}(\Omega)$ and 
\begin{equation}\label{bound}
\Psi(v_{n})\to c.
\end{equation} 
Consequently, there exists $v\in H_{0}^{1}(\Omega)$ such that 
\begin{equation}\label{weak}
v_{n}\rightharpoonup v \ \mbox{in \ $H_{0}^{1}(\Omega)$}.
\end{equation}

Suppose $v=0$. Since $\{\Psi(v_{n})\}$ is bounded, it follows that there exists $C>0$ such that
\begin{equation}\label{6}
C>\Psi(v_{n})=I(t_{v_{n}}v_{n})\geq I(tv_{n})=(1/2)t^2-\int_{\Omega}F(x, tv_{n})dx, \ \forall \ t>0.
\end{equation}
From $(f_{1})-(f_{2})$ and compact embedding, passing to the limit of $n\to\infty$ in $(\ref{6})$, we get 
$$
C\geq (1/2)t^{2}, \ \forall \ t>0,
$$
a clear contradiction. Thereby, we conclude that $v\neq 0$. 

Since $\{u_{n}\}\subset\mathcal{N}$ is a $(PS)_{c}$ sequence for $I$, we get
$$
o_{n}(1)+\int_{\Omega}\nabla u_{n}\nabla w dx=\int_{\Omega}f(x, u_{n})w dx, \ \forall \ w\in H_{0}^{1}(\Omega).
$$
Dividing last equality by $\|u_{n}\|$, we have
\begin{eqnarray}\nonumber\label{cuenta}
&&o_{n}(1)+\int_{\Omega}\nabla v_{n}\nabla w dx=\\
&&\int_{[v\neq 0]}\left[\frac{f(x, \|u_{n}\|v_{n})}{\|u_{n}\|v_{n}}\right]\chi_{[v_{n}\neq 0]}(x)v_{n}w dx+\int_{[v= 0]}\left[\frac{f(x, \|u_{n}\|v_{n})}{\|u_{n}\|v_{n}}\right]\chi_{[v_{n}\neq 0]}(x)v_{n}w dx.
\end{eqnarray}
By $(f_{1})$, \eqref{weak} and Lebesgue dominated convergence theorem it follows that
\begin{equation}\label{ahora}
\int_{[v= 0]}\left[\frac{f(x, \|u_{n}\|v_{n})}{\|u_{n}\|v_{n}}\right]\chi_{[v_{n}\neq 0]}(x)v_{n}w dx\to 0.
\end{equation}
On the other hand, by Proposition \ref{Fatou}(ii), \eqref{weak} and Lebesgue dominated convergence theorem, we get
\begin{equation}\label{salio}
\int_{[v\neq 0]}\left[\frac{f(x, \|u_{n}\|v_{n})}{\|u_{n}\|v_{n}}\right]\chi_{[v_{n}\neq 0]}(x)v_{n}w dx\to \int_{\Omega}\eta(x)vw dx.
\end{equation}
It follows from \eqref{ahora} and \eqref{salio} that, passing to the limit as $n\to\infty$ in \eqref{cuenta}, we obtain
\begin{equation}\label{eigen}
\int_{\Omega}\nabla v\nabla w dx=\int_{\Omega}\eta(x)vw dx, \ \forall \ w\in H_{0}^{1}(\Omega).
\end{equation}

Now we have to consider two cases:

\medskip

$(i)$ If $\lambda_{m+k}(\eta)\neq 1$, for all $k\in\N$, it follows from \eqref{eigen} that $v=0$. But this is a contradiction. Therefore $\{u_{n}\}$ is bounded in $H_{0}^{1}(\Omega)$.

\medskip

$(ii)$ If $\lambda_{m+k}(\eta)= 1$, for some $k\in\N$, then \eqref{eigen} implies that $v = e_{m+k}$, where $e_{m+k}$ is some normalized eigenfunction associated to $\lambda_{m+k}(\eta)$. From \eqref{eigen}, it follows also that $\|v\|^{2}=\int_{\Omega}\eta(x)v^{2}dx$, i.e., $v\in \partial\mathcal{A}$. On the other hand,
$$
\int_{\Omega}\eta(x)v^{2}dx=\|v\|^{2}\leq\liminf_{n\to\infty}\|v_{n}\|^{2}=1.
$$
Suppose that 
\begin{equation}
\int_{\Omega}\eta(x)v^{2}dx<1.
\label{alphainfinity1}
\end{equation}
Since
\begin{equation}\label{equal3}
t_{v_{n}}=\|t_{v_{n}}v_{n}\|=\|u_{n}\|\to\infty,
\end{equation}
by arguing as in \eqref{salio} and \eqref{ahora}, we obtain
\begin{equation}\label{listo}
\int_{\Omega}\left[\frac{F(\|u_{n}\|v_{n})}{(\|u_{n}\|v_{n})^2}\right]v_{n}^{2}dx\to (1/2)\int_{\Omega}\eta(x)v^{2}dx.
\end{equation}
Thus, passing to the limit as $n\to\infty$ in the identity
$$
\Psi(v_{n})=\|u_{n}\|^{2}\left\{\frac{1}{2}-\int_{\Omega}\left[\frac{F(\|u_{n}\|v_{n})}{(\|u_{n}\|v_{n})^2}\right]v_{n}^{2}dx\right\}
$$
and using \eqref{listo} we conclude that $\Psi(v_{n})\to\infty$, a contradiction with \eqref{bound}. Consequently, 
\begin{equation}\label{boundary}
\|v\|^{2}=\int_{\Omega}\eta(x)v^{2}dx=1,
\end{equation}
showing that 
\begin{equation}\label{norm}
\|v_{n}\|\to \|v\|.
\end{equation}
By using \eqref{weak} and \eqref{norm}, we derive that $v_{n}\to v$ in $H_{0}^{1}(\Omega)$ with $v\in \partial\mathcal{S}_{\mathcal{A}}$ (see \eqref{boundary}). 

Invoking Proposition \ref{main1}, we conclude that
\begin{equation}\label{end}
c\geq \int_{[v\neq 0]}\beta(x) dx,
\end{equation}
with $v=e_{m+k}\in\partial\mathcal{S}_{\mathcal{A}}$, contradicting the fact that $c < \inf_{u\in \partial\mathcal{S}_{\mathcal{A}}}\int_{[u\neq 0]}\beta(x) dx$. Then conclude that $\{u_{n}\}$ is bounded. 

\medskip

Since $\{u_{n}\}$ is a bounded sequence, there exists $u\in H_{0}^{1}(\Omega)$ such that $u_{n}\rightharpoonup u$ in $H_{0}^{1}(\Omega)$, up to a subsequence. 

Then, what is left to prove is that $\|u_{n}\|\to \|u\|$. For this, it is sufficient to note that since $\{u_{n}\}$ is a $(PS)_{c}$ sequence, we have
$$
o_{n}(1)+\int_{\Omega}\nabla u_{n}\nabla u dx=\int_{\Omega}f(x, u_{n})u dx.
$$
Passing to the limit as $n\to\infty$ in the previous equality, we get
\begin{equation}\label{equal5}
\|u\|^{2}=\int_{\Omega}f(x, u)u dx.
\end{equation}
Then \eqref{equal5} and Lebesgue's convergence theorem imply that
$$
\|u_n\|^2 = \int_\Omega f(x,u_n)u_ndx = \int_\Omega f(x,u)udx + o_n(1) = \|u\|^2 + o_n(1).
$$
\end{proof}




\section{Signed ground-state solution}\label{sec:ground}

{\bf Proof of Thorem \ref{teo1}:}

Let $\{u_{n}\}\subset \mathcal{N}$ be such that $I(u_{n})\to c_{\mathcal{N}}$. Remember that $v_{n}:=u_{n}/\|u_{n}\|\in \mathcal{S}_{\mathcal{A}}$ (see Proposition \ref{proposition2}$(A_{3})$) and 
\begin{equation}\label{aca}
\Psi(v_{n})\to c_{\mathcal{N}}.
\end{equation}

We will show that $\{v_{n}\}$ is a $(PS)_{c_{\mathcal{N}}}$ sequence for the functional $\Psi$. For this, consider the map $\Upsilon:\overline{\mathcal{S}_{\mathcal{A}}}\to\R\cup\{\infty\}$ defined by 

\begin{equation}
\Upsilon(u)=\left \{ \begin{array}{ll}
\Psi(u) & \mbox{if $u\in \mathcal{S}_{\mathcal{A}}$,}\\
\int_{[u\neq 0]}\beta(x) dx & \mbox{if $u\in\partial\mathcal{S}_{\mathcal{A}}$.}
\end{array}\right.
\end{equation}

It follows from Lemma \ref{level} that $c_{\mathcal{N}}=\inf_{u\in \overline{\mathcal{S}_{\mathcal{A}}}}\Upsilon(u)$. Observe that $\overline{\mathcal{S}_{\mathcal{A}}}$ is a complete metric space with metric provided by the norm of $H_{0}^{1}(\Omega)$.

\medskip

{\bf Claim:} $\Upsilon$ is lower semi-continuous.\\

Let $\{u_{n}\}\subset \overline{\mathcal{S}_{\mathcal{A}}}$ and $u\in \overline{\mathcal{S}_{\mathcal{A}}}$ be such that $u_{n}\to u$ in $H_{0}^{1}(\Omega)$. If $u\in \mathcal{S}_{\mathcal{A}}$ then, for $n$ large enough, $\Upsilon(u_{n})=\Psi(u_{n})$ and
$$
\Upsilon(u_{n})=\Psi(u_{n})\to\Psi(u)=\Upsilon(u),
$$
since $\Psi$ is continuous. On the other hand, if $u\in \partial\mathcal{S}_{\mathcal{A}}$, we have two cases to consider. If there exists a subsequence $\{u_{n}\}\subset \mathcal{S}_{\mathcal{A}}$, then by Proposition \ref{main1}
$$
\Upsilon(u)=\int_{[u\neq 0]}\beta(x) dx\leq \liminf\limits_{n\to\infty}\Psi(u_{n})=\liminf\limits_{n\to\infty}\Upsilon(u_{n}).
$$
If there exists a subsequence $\{u_{n}\}\subset \partial\mathcal{S}_{\mathcal{A}}$ then by Proposition \ref{Fatou}(iii) with $v_{n}=v=\beta(x)$, we have
$$
\Upsilon(u)=\int_{[u\neq 0]}\beta(x) dx\leq \liminf\limits_{n\to\infty}\int_{[u_{n}\neq 0]}\beta(x) dx=\liminf\limits_{n\to\infty}\Upsilon(u_{n}).
$$
In any case, $\Upsilon$ is a is lower semi-continuous map and the claim is proven.

\medskip

Since $\Upsilon$ is bounded from below (see Lemma \ref{lemma3}), it follows from Theorem 1.1 in \cite{Ek} that for each $\varepsilon, \lambda>0$ 
small enough and $u\in \Upsilon^{-1}[c_\mathcal{N},c_\mathcal{N} + \varepsilon]$ there exists $v\in \overline{\mathcal{S}_{\mathcal{A}}}$ such that
\begin{equation}
c_{\mathcal{N}}\leq \Upsilon(v)\leq \Upsilon(u), \  \|u-v\|\leq \lambda \ \mbox{and} \ \Upsilon(w)>\Upsilon(v)-(\varepsilon/\lambda)\|v-w\|, \ \forall \ w\neq v.
\label{neweq}
\end{equation}
On the other hand, it follows from Lemma \ref{level} that
\begin{equation}\label{equali}
\Upsilon^{-1}[c_\mathcal{N},c_\mathcal{N} + \varepsilon]=\Psi^{-1}[c_\mathcal{N},c_\mathcal{N} + \varepsilon], v\in \mathcal{S}_{\mathcal{A}} \ \mbox{and} \ \Upsilon(v)=\Psi(v),
\end{equation} 
for $\varepsilon$ small enough. Up to a subsequence, it follows from \eqref{aca} that we can choose in (\ref{neweq}) $u=v_{n}$, $\varepsilon=1/n^{2}$ and $\lambda=1/n$, in order to get
$\widehat{v}_{n}\in \mathcal{S}_{\mathcal{A}}$, satisfying
\begin{equation}\label{PS1}
\Psi(\widehat{v}_{n})\to c_{\mathcal{N}}, \ \|v_{n}- \widehat{v}_{n}\|\to 0
\end{equation}
and
\begin{equation}\label{deriv}
\Upsilon(w)>\Psi(\widehat{v}_{n})-(1/n)\|\widehat{v}_{n}-w\|, \ \forall \ w\neq \widehat{v}_{n}.
\end{equation}

Let $\gamma_{n}: (-\delta_{n}, \delta_{n})\to \mathcal{S}_{\mathcal{A}}$ be a differentiable curve, with $\delta_{n}>0$ small enough, such that 
$\gamma_{n}(0)=\widehat{v}_{n}$ and $\gamma'_{n}(0)=z\in T_{\widehat{v}_{n}}(\mathcal{S}_{\mathcal{A}})$, where $T_{\widehat{v}_{n}}(\mathcal{S}_{\mathcal{A}})$ denotes the tangent space of $\mathcal{S}_\mathcal{A}$ at $\widehat{v}_n$.
Choosing $w=\gamma_{n}(t)$, it follows from $(\ref{deriv})$ that
\begin{equation}\label{deriv1}
-[\Psi(\gamma_{n}(t))-\Psi(\gamma_{n}(0))]<(1/n)\|\gamma_{n}(t)-\gamma_{n}(0)\|.
\end{equation}

By Mean Value Theorem, there exists $c \in (0,t)$ such that
\begin{equation}
 \|\gamma_{n}(t)-\gamma_{n}(0)\|
\leq \|  \gamma'_{n}(c)\|t. 
\label{deriv2}
\end{equation}
Thus, multiplying both sides of \eqref{deriv1} by $1/t$, passing to the limit as $t\rig 0$ and using \eqref{deriv2}, we get
$$
-\Psi'(\widehat{v}_{n})z\leq \frac{1}{n}\|z\|,
$$
where $z\in  T_{\widehat{v}_{n}}(\mathcal{S}_{\mathcal{A}})$ is arbitrary. By linearity, we have
$$
|\Psi'(\widehat{v}_{n})z|\leq \frac{1}{n}\|z\|.
$$
Therefore,
\begin{equation}\label{PS3}
\|\Psi'(\widehat{v}_{n})\|_{\ast}\to 0,
\end{equation}
as $n\to\infty$, and, by \eqref{PS1}, we conclude that $\{v_{n}\}$ is a $(PS)_{c_{\mathcal{N}}}$ sequence for $\Psi$. It follows from Lemma \ref{level} and Proposition \ref{main2} that there exists $v\in \mathcal{S}_{\mathcal{A}}$ such that, passing to a subsequence, $v_{n}\to v$ in $H_{0}^{1}(\Omega)$. Thus $\Psi'(v)=0$ and $\Psi(v)=c_{\mathcal{N}}$. Defining $u:=m(v)\in\mathcal{N}$ and using Proposition \ref{proposition3}$(iv)$, we conclude that $I'(u)=0$ and $I(u)=c_{\mathcal{N}}$.

To show that $u$ does not change sign, observe that if $u^\pm \neq 0$, then $u^{\pm}\in \mathcal{N}$ (to verify it just calculate $I'(u)u^\pm$). Then
\begin{equation}\label{sign}
c_{\mathcal{N}}=I(u)=I(u^{+})+I(u^{-})\geq 2c_{\mathcal{N}},
\end{equation}
which is a clear contradiction. Then it follows that either $u^+ = 0$ or $u^- = 0$ and then $u$ is a signed solution.

$\square$


\section{Multiplicity of solutions}\label{sec:multiplicity}

The main goal of this section is to prove Theorem \ref{teore2}. In its proof, we use the Krasnoselski's genus theory.  Thus, we start defining some preliminaries notations:
$$
\gamma_{j}:=\left\{B\in \mathcal{E}: B\subset \mathcal{S}_{\mathcal{A}} \ \mbox{and} \ \gamma(B)\geq j\right\},
$$
where 
$$
\mathcal{E}=\{B\subset H_{0}^{1}(\Omega)\backslash\{0\}: B \ \mbox{is closed and $B=-B$}\}
$$ 
and $\gamma:\mathcal{E}\to \mathds{Z}\cup \{\infty\}$ is the Krasnoselski's genus function, which is defined by

\begin{equation}
\gamma(B)=\left \{ \begin{array}{ll}
n:=\min\{ m\in\N: \ \mbox{there exists an odd map $\varphi\in C(B, \R^{m}\backslash\{0\})$}\},\\
\infty, \mbox{if there exists no map $\varphi\in C(B, \R^{m}\backslash\{0\})$},\\
0, \mbox{if $B=\emptyset$}.
\end{array}\right.
\end{equation}
It is important to note that, since $\mathcal{S}_{\mathcal{A}}=-\mathcal{S}_{\mathcal{A}}$, $\gamma_{j}$ is well defined.

In the sequel we will state some standard properties of the genus which will be used in this work. More information about this
subject can be found, for instance, in \cite{AR} or \cite{Kr}.

\begin{lemma}\label{genus}
Let $B$ and $C$ be sets in $\mathcal{E}$.

\begin{enumerate}

\item[$(i)$] If $x\neq 0$, then $\gamma(\{x\}\cup \{-x\})=1$;

\item[$(ii)$]  If there exists an odd map $\varphi\in C(B, C)$, then $\gamma(B)\leq \gamma(C)$. In particular, if $B\subset C$ then $\gamma(B)\leq \gamma(C)$.

\item[$(iii)$]  If there exists an odd homeomorphism $\varphi:B\rightarrow C$, then $\gamma(B)=\gamma(C)$. In particular, if $B$ is homeomorphic to the unit sphere in $\R^{n}$, then $\gamma(B)=n$.

\item[$(iv)$]  If $B$ is a compact set, then there exists a neighborhood $K\in \mathcal{E}$ of $B$ such that
$\gamma(B)=\gamma(K)$.

\item[$(v)$]  If $\gamma(C)<\infty$, then
$\gamma(\overline{B\backslash C})\geq \gamma(B)-\gamma(C)$.

\item[$(vi)$]  If $\gamma(A) \geq 2$, then $A$ has infinitely many points.
\end{enumerate}
\end{lemma}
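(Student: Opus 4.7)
\textbf{Proof proposal for Lemma \ref{genus}.}

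My plan is to treat the six items in increasing order of difficulty, reducing most of them to two core tools: explicit construction of odd test maps, and the Tietze extension theorem followed by a symmetrization $\varphi \mapsto (\varphi(u)-\varphi(-u))/2$ to upgrade a continuous extension to an odd one. For item $(i)$, I would simply display the odd map $\varphi:\{x,-x\}\to\R\setminus\{0\}$ sending $x\mapsto 1$ and $-x\mapsto -1$, which is continuous (the set is discrete) and shows $\gamma(\{x,-x\})\leq 1$; nonemptiness then gives the reverse inequality. Item $(ii)$ follows at once: if $\gamma(C)=n$ and $\psi\in C(C,\R^{n}\setminus\{0\})$ is odd, then $\psi\circ\varphi\in C(B,\R^{n}\setminus\{0\})$ is odd, so $\gamma(B)\leq n$. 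Item $(iii)$ is $(ii)$ applied to $\varphi$ and $\varphi^{-1}$; for the sphere part, the identity $\mathbb{S}^{n-1}\to\mathbb{S}^{n-1}\subset\R^{n}\setminus\{0\}$ shows $\gamma(\mathbb{S}^{n-1})\leq n$, while the reverse $\gamma(\mathbb{S}^{n-1})\geq n$ is the Borsuk--Ulam theorem (any odd continuous $\mathbb{S}^{n-1}\to\R^{n-1}$ has a zero), which I would cite as standard.

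For item $(iv)$, suppose $\gamma(B)=n$ and pick an odd $\varphi\in C(B,\R^{n}\setminus\{0\})$. Applying Tietze componentwise produces a continuous extension $\widetilde{\varphi}:H_{0}^{1}(\Omega)\to\R^{n}$; symmetrizing via $\overline{\varphi}(u):=\tfrac12(\widetilde{\varphi}(u)-\widetilde{\varphi}(-u))$ yields an odd continuous extension that agrees with $\varphi$ on $B$. Since $B$ is compact and $\overline{\varphi}$ is nonzero on $B$, continuity gives an open symmetric neighborhood $U\supset B$ on which $\overline{\varphi}\neq 0$. I would take $K$ to be a closed symmetric neighborhood of $B$ contained in $U$ (e.g.\ a suitable sublevel of $\dist(\cdot,B)$). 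Then $\overline{\varphi}|_{K}$ witnesses $\gamma(K)\leq n=\gamma(B)$, while $B\subset K$ plus $(ii)$ gives $\gamma(B)\leq\gamma(K)$. Item $(vi)$ is contrapositive: if $A$ is finite, then $A=\{\pm x_{1},\dots,\pm x_{k}\}$ with $x_{i}\neq 0$, its points are isolated, and the odd map sending each $x_{i}\mapsto 1$, $-x_{i}\mapsto -1$ is continuous into $\R\setminus\{0\}$, so $\gamma(A)\leq 1$.

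The main obstacle is item $(v)$, the subadditivity-type estimate. My plan is to reduce it to the inequality $\gamma(B)\leq\gamma(\overline{B\setminus C})+\gamma(C)$, using $B\subset\overline{B\setminus C}\cup C$ together with monotonicity from $(ii)$. To obtain this subadditivity for closed symmetric sets $A_{1},A_{2}\in\mathcal{E}$ with finite genera $n_{1},n_{2}$, I would pick odd continuous $\varphi_{i}:A_{i}\to\R^{n_{i}}\setminus\{0\}$, use $(iv)$ to choose closed symmetric neighborhoods $K_{i}\supset A_{i}$ with $\gamma(K_{i})=n_{i}$ and odd continuous extensions $\overline{\varphi_{i}}$ on $K_{i}$ that remain nonzero there, then further Tietze-extend each $\overline{\varphi_{i}}$ oddly to all of $H_{0}^{1}(\Omega)$. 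The map $\Phi:=(\overline{\varphi_{1}},\overline{\varphi_{2}}):A_{1}\cup A_{2}\to\R^{n_{1}+n_{2}}$ is odd and continuous, and it never vanishes on $A_{1}\cup A_{2}$ because on $A_{i}$ the $i$-th block is nonzero; this gives $\gamma(A_{1}\cup A_{2})\leq n_{1}+n_{2}$. Applying this with $A_{1}=\overline{B\setminus C}$ and $A_{2}=C$ and rearranging yields $(v)$. The delicate point I expect to spend care on is that the Tietze extension $\overline{\varphi_{i}}$ on all of $H_{0}^{1}(\Omega)$ may vanish outside $K_{i}$, which is precisely why the argument must pair the two extensions into $\R^{n_{1}+n_{2}}$ rather than use each one alone.
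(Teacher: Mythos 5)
The paper does not actually prove Lemma \ref{genus}: it states these as standard properties of the Krasnoselskii genus and refers the reader to \cite{AR} and \cite{Kr}, so there is no internal proof to compare against; what you have written is the classical textbook argument, and it is essentially correct. Items $(i)$--$(iii)$ and $(vi)$ are handled exactly as in the standard references (explicit two-valued odd maps on discrete symmetric sets, composition for monotonicity, Borsuk--Ulam for the sphere), and your proof of $(iv)$ is complete: Tietze extension plus the symmetrization $\overline{\varphi}(u)=\tfrac12(\widetilde{\varphi}(u)-\widetilde{\varphi}(-u))$ recovers $\varphi$ on $B$ by oddness, and the neighborhood $U=\{\overline{\varphi}\neq 0\}$ automatically avoids $0$ since $\overline{\varphi}(0)=0$, so your closed symmetric $\delta$-neighborhood $K$ indeed lies in $\mathcal{E}$ (compactness of $B$ is what gives the uniform $\delta$ with $K\subset U$).

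The one blemish is in $(v)$: you invoke $(iv)$ to produce the neighborhoods $K_{i}\supset A_{i}$, but $(iv)$ is stated (and true) only for \emph{compact} sets, whereas $A_{1}=\overline{B\backslash C}$ and $A_{2}=C$ are merely closed and, in the infinite-dimensional setting of $H^{1}_{0}(\Omega)$, typically non-compact. Fortunately this layer of your argument is dead weight: nothing in the final step uses nonvanishing of $\overline{\varphi_{i}}$ on $K_{i}$, only on $A_{i}$ itself. So delete the detour through $(iv)$, Tietze-extend each $\varphi_{i}$ oddly from the closed set $A_{i}$ directly to all of $H^{1}_{0}(\Omega)$, and observe, as you already do, that the paired map $\Phi=(\overline{\varphi_{1}},\overline{\varphi_{2}}):A_{1}\cup A_{2}\to\R^{n_{1}+n_{2}}$ never vanishes because the $i$-th block equals $\varphi_{i}\neq 0$ on $A_{i}$; this yields subadditivity $\gamma(A_{1}\cup A_{2})\leq n_{1}+n_{2}$, and with $B\subset\overline{B\backslash C}\cup C$, monotonicity, and the hypothesis $\gamma(C)<\infty$ (needed to rearrange the inequality), item $(v)$ follows. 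You correctly identified the genuinely delicate point, namely that a single global odd extension may vanish off its original domain, which is precisely why the two extensions must be paired into $\R^{n_{1}+n_{2}}$; with the compactness misstep removed, your proof is the standard one and is sound.
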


From now on, we denote by $s_{m}$ the sum of the dimensions of all eigenspaces $V_{j}$ associated to eigenvalues $\lambda_{j}(\eta)$, where $1\leq j\leq m$. 

\begin{lemma}\label{nosso}
Suppose $(f_{2})$ holds. \textcolor{blue}{Then}
\begin{enumerate}
\item[$(i)$] $\gamma_{s_{m}}\neq \emptyset$;
\item[$(ii)$] $\gamma_{1}\supset\gamma_{2}\supset\ldots\supset\gamma_{s_{m}}$;
\item[$(iii)$] If $\varphi\in C(\mathcal{S}_{\mathcal{A}}, \mathcal{S}_{\mathcal{A}})$ is odd, then $\varphi(\gamma_j) \subset \gamma_j$, for all $1\leq j\leq s_{m}$;
\item[$(iv)$] If $B\in \gamma_{j}$ and $C\in \mathcal{E}$ with $\gamma(C)\leq s<j\leq s_{m}$, then $\overline{B\backslash C}\in \gamma_{j-s}$.
\end{enumerate}
\end{lemma}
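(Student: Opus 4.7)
The plan is to verify the four items in turn using the eigenspace structure of the weighted eigenvalue problem \eqref{EP} and the genus properties collected in Lemma \ref{genus}.

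For (i), I would exhibit an explicit member of $\gamma_{s_m}$. Let $V_j$ denote the eigenspace of \eqref{EP} (with $\theta = \eta$) associated to $\lambda_j(\eta)$, set $W := V_1 \oplus \cdots \oplus V_m$ (of dimension $s_m$), and take $B := W \cap \mathcal{S}$. Standard spectral theory yields that the $V_j$'s are mutually orthogonal both in the $H_0^1(\Omega)$ inner product and in the $\eta$-weighted bilinear form $(u,v) \mapsto \int_\Omega \eta\, u v\, dx$, while for each $u_j \in V_j$ integration by parts of the eigenfunction equation gives $\|u_j\|^2 = \lambda_j(\eta) \int_\Omega \eta u_j^2\, dx$. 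Writing any $u = \sum_{j=1}^m u_j \in W$, I would deduce $\|u\|^2 = \sum_{j=1}^m \|u_j\|^2$ and $\int_\Omega \eta u^2\, dx = \sum_{j=1}^m \|u_j\|^2/\lambda_j(\eta)$. Since $(f_2)$ gives $\lambda_j(\eta) \leq \lambda_m(\eta) < 1$ for all $j \leq m$, this produces $\|u\|^2 < \int_\Omega \eta u^2\, dx$ for every nonzero $u \in W$, so $W \setminus \{0\} \subset \mathcal{A}$ and hence $B \subset \mathcal{S}_\mathcal{A}$. Since $B$ is the unit sphere of a subspace of dimension $s_m$, it is odd-homeomorphic to the standard sphere in $\R^{s_m}$, so Lemma \ref{genus}(iii) gives $\gamma(B) = s_m$ and $B \in \gamma_{s_m}$.

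Item (ii) is immediate from the definition, as $\gamma(B) \geq j$ implies $\gamma(B) \geq j-1$. For (iii), given $B \in \gamma_j$ and an odd $\varphi \in C(\mathcal{S}_\mathcal{A}, \mathcal{S}_\mathcal{A})$, I would observe that $\varphi(B)$ is symmetric (by oddness of $\varphi$ and symmetry of $B$), contained in $\mathcal{S}_\mathcal{A}$, and avoids $0$; since $\varphi|_B: B \to \varphi(B)$ is a continuous odd surjection, Lemma \ref{genus}(ii) gives $\gamma(\varphi(B)) \geq \gamma(B) \geq j$. For (iv), I would directly invoke Lemma \ref{genus}(v) to obtain $\gamma(\overline{B \setminus C}) \geq \gamma(B) - \gamma(C) \geq j - s$; the set $\overline{B \setminus C}$ is closed by construction and symmetric since both $B$ and $C$ are, while the inclusion $B \in \mathcal{E}$ (closed in $H_0^1(\Omega)$) yields $\overline{B \setminus C} \subset \overline{B} = B \subset \mathcal{S}_\mathcal{A}$, which in particular prevents $0$ from belonging to $\overline{B \setminus C}$.

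The only (minor) obstacle I foresee is the closedness of $\varphi(B)$ in (iii) for a merely continuous $\varphi$; this will not be a genuine issue in the applications to follow, since the relevant $\varphi$ will be an odd homeomorphism arising from a symmetric deformation, so its image will automatically be closed.
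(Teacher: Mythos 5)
Your proposal is correct and takes essentially the same route as the paper: for $(i)$ the paper also takes the unit sphere of $V_{1}\oplus\cdots\oplus V_{m}$, notes it lies in $\mathcal{S}_{\mathcal{A}}$ by $(f_{2})$, and applies Lemma \ref{genus}$(iii)$, while $(ii)$--$(iv)$ are handled by exactly the genus properties you invoke (Lemma \ref{genus}$(ii)$ and $(v)$). You simply make explicit what the paper leaves implicit --- the weighted spectral computation showing $W\setminus\{0\}\subset\mathcal{A}$, and the closedness of $\varphi(B)$ in $(iii)$, a point the paper's one-line proof also glosses over.
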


\begin{proof}
$(i)$ Let $\mathcal{S}_{s_{m}}$ be the ($s_{m}$-dimensional) unit sphere of $V_{1}\oplus V_{2}\oplus\ldots\oplus V_{m}$. From $(f_{2})$, it is clear that $\mathcal{S}_{s_{m}}\subset \mathcal{S}_{\mathcal{A}}$. Moreover, from Lemma \ref{genus}$(iii)$, we have that $\gamma(\mathcal{S}_{s_{m}})=s_{m}$, showing that $\mathcal{S}_{s_{m}}\in \gamma_{s_{m}}$. $(ii)$ It is immediate. $(iii)$ It follows directly from Lemma \ref{genus}$(ii)$. $(iv)$ It is a consequence of Lemma \ref{genus}$(v)$.
\end{proof}

Now, for each $1\leq j\leq s_{m}$, we define the following minimax levels
\begin{equation}
c_{j}=\inf_{B\in \gamma_{j}}\sup_{u\in B}\Psi(u).
\end{equation}

\begin{lemma}\label{minimax}
Suppose $(f_{1})-(f_{2})$ and \eqref{finito} to hold. Then,
$$
0< c_{\mathcal{N}}=c_{1}\leq c_{2}\leq \ldots\leq c_{s_{m}}< \inf_{u\in\partial\mathcal{S}_{\mathcal{A}}}\int_{[u\neq 0]}\beta(x) dx.
$$


\end{lemma}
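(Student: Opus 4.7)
The plan is to verify the five inequalities separately, using the genus bookkeeping for the $c_j$'s and replaying the argument of Lemma \ref{level} for the outer bounds. Since $\tau\leq\tau_m$ (the former being an infimum over $\mathcal{S}_{\mathcal{A}}$, the latter a maximum over the compact subset $\mathcal{S}_{s_m}\subset\mathcal{S}_{\mathcal{A}}$), hypothesis \eqref{finito} implies \eqref{ttttt}; hence Theorem \ref{teo1} applies, $c_{\mathcal{N}}$ is attained, and Remark \ref{rem2} forces $c_{\mathcal{N}}>0$.

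For $c_{\mathcal{N}}=c_1$, the key observation is that $\Psi$ is even: oddness of $f(x,\cdot)$ makes $F$ even, and the uniqueness statement in Proposition \ref{proposi��o1}$(i)$ forces $t_{-u}=t_u$. Then for each $u\in\mathcal{S}_{\mathcal{A}}$ the pair $\{u,-u\}$ is a closed symmetric subset of $\mathcal{S}_{\mathcal{A}}$ of genus one by Lemma \ref{genus}(i), so $\{u,-u\}\in\gamma_1$ and $\sup_{\{u,-u\}}\Psi=\Psi(u)$; taking infimum over $u$ and invoking Proposition \ref{proposition3}(iv) yields $c_1\leq c_{\mathcal{N}}$, while the reverse is immediate because each $B\in\gamma_1$ is a nonempty subset of $\mathcal{S}_{\mathcal{A}}$. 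The chain $c_1\leq c_2\leq\ldots\leq c_{s_m}$ then follows directly from Lemma \ref{nosso}(ii): the inclusion $\gamma_{j+1}\subset\gamma_j$ makes the infimum of $B\mapsto\sup_B\Psi$ only larger on the smaller family.

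The decisive step is the strict upper bound on $c_{s_m}$. By Lemma \ref{nosso}(i), $\mathcal{S}_{s_m}\in\gamma_{s_m}$, hence $c_{s_m}\leq\sup_{u\in\mathcal{S}_{s_m}}\Psi(u)$. The inclusion $\mathcal{S}_{s_m}\subset\mathcal{S}_{\mathcal{A}}$ is enforced by $(f_2)$: expanding a normalized $u=\sum_{j=1}^{m}c_j e_j$ with $\sum c_j^2=1$ in eigenfunctions simultaneously orthonormal in $H_0^1(\Omega)$ and $\eta$-orthogonal yields $\int_\Omega\eta(x)u^2\,dx=\sum_{j=1}^{m}c_j^2/\lambda_j(\eta)>1=\|u\|^2$, since $\lambda_j(\eta)\leq\lambda_m(\eta)<1$. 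The finite-dimensional sphere $\mathcal{S}_{s_m}$ is compact, so Proposition \ref{proposition2}$(A_2)$ yields $\tau_m:=\max_{u\in\mathcal{S}_{s_m}}t_u<\infty$ (this is the quantity appearing in \eqref{finito}), and continuity of $\Psi$ provides a maximizer $u_0\in\mathcal{S}_{s_m}$. Replaying estimate \eqref{uuuuu} at $u_0$ gives
$$c_{s_m}\leq\Psi(u_0)<\frac{t_{u_0}^{2}}{2\lambda_1(\eta-\alpha)}\leq\frac{\tau_m^{2}}{2\lambda_1(\eta-\alpha)},$$
and combining with \eqref{finito} and the measure bound \eqref{vvvvv} from the proof of Lemma \ref{level} closes the chain:
$$\frac{\tau_m^{2}}{2\lambda_1(\eta-\alpha)}<\left(\frac{S(\Omega)}{|\eta|_\infty}\right)^{N/2}\essinf_{x\in\Omega}\beta(x)\leq\inf_{u\in\partial\mathcal{S}_{\mathcal{A}}}\int_{[u\neq 0]}\beta(x)\,dx.$$
The main obstacle is preserving strictness across the supremum; the compactness of the finite-dimensional sphere $\mathcal{S}_{s_m}$ is precisely what guarantees that the sup is attained at a point $u_0$ with $t_{u_0}<\infty$, so that the pointwise strict inequality from \eqref{uuuuu} propagates to a global strict bound via $\tau_m$.
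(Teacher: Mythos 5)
Your proof is correct and takes essentially the same route as the paper's: positivity of $c_{\mathcal{N}}$ via attainment (Remark \ref{rem2}), the identity $c_{\mathcal{N}}=c_{1}$ from Lemma \ref{genus}$(i)$, the monotone chain from Lemma \ref{nosso}$(ii)$, and the strict upper bound on $c_{s_{m}}$ by evaluating the estimate \eqref{uuuuu} of Lemma \ref{level} on the compact sphere $\mathcal{S}_{s_{m}}$ (giving \eqref{zzz} with $\tau_{m}$ finite by Proposition \ref{proposition2}$(A_{2})$) and then invoking Lemma \ref{limita} together with \eqref{finito}. The only difference is that you spell out steps the paper leaves implicit, namely the implication \eqref{finito} $\Rightarrow$ \eqref{ttttt} that justifies attainment of $c_{\mathcal{N}}$, the evenness of $\Psi$ behind $c_{1}\leq c_{\mathcal{N}}$, and the eigenfunction expansion showing $\mathcal{S}_{s_{m}}\subset\mathcal{S}_{\mathcal{A}}$.
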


\begin{proof}
$(i)$ The First inequality follows from Lemma \ref{lemma3} (see also Remark \ref{rem2}). The equality $c_{\mathcal{N}}=c_{1}$ can be easily obtained from Lemma \ref{genus}$(i)$ and the definition of $c_{1}$. On the other hand, the monotonicity of the levels $c_{j}$ is a consequence of Lemma \ref{nosso}$(ii)$. To prove last inequality, observe that by the proof of Lemma \ref{level}, we have
$$
\Psi(u)\leq [1/2\lambda_{1}(\eta-\alpha)]t_{u}^{2}, \ \forall \ u\in \mathcal{S}_{\mathcal{A}}.
$$
Thus, 
\begin{equation}\label{zzz}
c_{s_{m}}\leq\max_{u\in \mathcal{S}_{s_{m}}}\Psi(u)\leq [1/2\lambda_{1}(\eta-\alpha)]\tau_{m}^{2},
\end{equation}
where, by items $(A_{1})$ and $(A_{2})$ of Proposition \ref{proposition2}
\begin{equation}\label{numer}
0<\tau_{m}:=\max_{u\in \mathcal{S}_{s_{m}}}t_{u}<\infty.
\end{equation}
The result follows now from Lemma \ref{limita}, \eqref{finito} and \eqref{zzz}.

%
\end{proof}

Next proposition is crucial to ensure the multiplicity of solutions.

\begin{proposition}\label{mult}
Suppose that $f$ satisfies $(f_{1})-(f_{2})$ and \eqref{finito}. If $c_{j}=\ldots=c_{j+p}\equiv c$, $j+p\leq s_{m}$, then $\gamma(K_{c})\geq p+1$, where $K_{c}:=\{v\in \mathcal{S}_{\mathcal{A}}: \Psi(v)=c\ \ \mbox{and} \ \Psi'(v)=0\}$.
\end{proposition}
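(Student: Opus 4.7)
The plan is a contradiction argument via an equivariant deformation lemma on the $C^{1}$-submanifold $\mathcal{S}_{\mathcal{A}}$. Suppose $\gamma(K_{c})\leq p$. Since $f(x,\cdot)$ is odd, the functional $\Psi$ is even and $K_{c}$ is symmetric. Moreover, by Lemma \ref{minimax}, the level $c$ lies in the interval $[c_{\mathcal{N}},\inf_{u\in\partial\mathcal{S}_{\mathcal{A}}}\int_{[u\neq 0]}\beta(x)\,dx)$, where by Proposition \ref{main2} the map $\Psi$ satisfies $(PS)_{c}$; in particular $K_{c}$ is compact. Lemma \ref{genus}$(iv)$ then furnishes a symmetric open neighborhood $U\subset\mathcal{S}_{\mathcal{A}}$ of $K_{c}$ with $\gamma(\overline{U})=\gamma(K_{c})\leq p$.

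Next I would invoke the standard equivariant deformation lemma for $C^{1}$-functionals on $C^{1}$-submanifolds to produce an $\varepsilon>0$ and an odd continuous map $\eta\colon\mathcal{S}_{\mathcal{A}}\to\mathcal{S}_{\mathcal{A}}$ satisfying $\eta(\Psi^{c+\varepsilon}\setminus U)\subset\Psi^{c-\varepsilon}$, where $\Psi^{a}:=\{u\in\mathcal{S}_{\mathcal{A}}:\Psi(u)\leq a\}$. The delicate point is the noncompleteness of $\mathcal{S}_{\mathcal{A}}$: one must guarantee that the pseudo-gradient flow does not escape through $\partial\mathcal{S}_{\mathcal{A}}$. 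This is precisely where Proposition \ref{main1} and the strict inequality in Lemma \ref{minimax} enter: for $\varepsilon$ small enough, the sublevel set $\Psi^{c+\varepsilon}$ stays uniformly separated from $\partial\mathcal{S}_{\mathcal{A}}$, because any sequence $\{u_{n}\}\subset\mathcal{S}_{\mathcal{A}}$ with $\dist(u_{n},\partial\mathcal{S}_{\mathcal{A}})\to 0$ would satisfy $\liminf_{n}\Psi(u_{n})\geq \inf_{u\in\partial\mathcal{S}_{\mathcal{A}}}\int_{[u\neq 0]}\beta(x)\,dx>c+\varepsilon$. This quantitative barrier is the main obstacle of the proof and is the reason hypothesis \eqref{finito} is imposed.

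Finally, by the definition of $c_{j+p}=c$, pick $B\in\gamma_{j+p}$ with $\sup_{u\in B}\Psi(u)<c+\varepsilon$, so $B\subset\Psi^{c+\varepsilon}$. Set $\tilde{B}:=\overline{B\setminus U}$. By Lemma \ref{nosso}$(iv)$ (with $s=p<j+p$) we obtain $\tilde{B}\in\gamma_{(j+p)-p}=\gamma_{j}$, and hence $\eta(\tilde{B})\in\gamma_{j}$ by Lemma \ref{nosso}$(iii)$. The deformation property then yields
$$
c_{j}\leq \sup_{u\in \eta(\tilde{B})}\Psi(u)\leq c-\varepsilon<c=c_{j},
$$
the desired contradiction. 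Therefore $\gamma(K_{c})\geq p+1$, as claimed.
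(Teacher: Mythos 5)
Your proposal is correct and follows essentially the same route as the paper's proof: a contradiction argument combining compactness of $K_{c}$ (via Proposition \ref{main2} and Lemma \ref{minimax}), a genus-controlled neighborhood from Lemma \ref{genus}$(iv)$, an odd deformation on the noncomplete manifold $\mathcal{S}_{\mathcal{A}}$ whose trajectories are prevented from reaching $\partial\mathcal{S}_{\mathcal{A}}$ by Proposition \ref{main1} together with the strict inequality in Lemma \ref{minimax} (the paper phrases this per trajectory, using monotonicity of $t\mapsto\Psi(\eta(u,t))$, rather than as uniform separation of the sublevel set, but the ingredients are identical), followed by the same genus counting via Lemma \ref{nosso}$(iii)$--$(iv)$. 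One small caveat: your barrier inequality $\liminf_{n}\Psi(u_{n})\geq\inf_{u\in\partial\mathcal{S}_{\mathcal{A}}}\int_{[u\neq 0]}\beta(x)\,dx$ slightly overstates Proposition \ref{main1}, which only gives $\liminf_{n}\Psi(u_{n})\geq\int_{[u\neq 0]}\beta(x)\,dx$ for the \emph{weak} limit $u$ (which need not lie on $\partial\mathcal{S}_{\mathcal{A}}$); this is harmless, because that weak limit satisfies $\int_{\Omega}\eta(x)u^{2}dx=1$ and $\|u\|\leq 1$, so the computation in Lemma \ref{limita} together with \eqref{finito} still yields the lower bound exceeding $c_{s_{m}}+\varepsilon$ that your separation claim requires.
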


\begin{proof}
Suppose that $\gamma(K_{c})\leq p$. It follows from Proposition \ref{main2} and Lemma \ref{minimax}$(i)$ that $K_{c}$ is a compact set. Thus, by Lemma \ref{genus}$(iv)$, there exists a compact neighborhood $K$ (in $H_{0}^{1}(\Omega)$) of $K_{c}$ such that $\gamma(K)\leq p$. Defining $M:=K\cap \mathcal{S}_{\mathcal{A}}$, we derive from Lemma \ref{genus}$(ii)$ that $\gamma(M)\leq p$. Despite the noncompleteness of $\mathcal{S}_{\mathcal{A}}$ we can use Theorem 3.11 in \cite{Str} (see also Remark 3.12 in \cite{Str}) to ensure the existence of an odd homeomorphisms family $\eta(., t)$ of $\mathcal{S}_{\mathcal{A}}$ such that, for each $u\in \mathcal{S}_{\mathcal{A}}$, 
\begin{equation}\label{iguald}
\eta(u, 0)=u
\end{equation} 
and
\begin{equation}\label{traj}
t\mapsto\Psi(\eta(u, t)) \ \mbox{is non-increasing}.
\end{equation}


Observe that, although $\mathcal{S}_{\mathcal{A}}$ is non-complete, from Proposition \ref{main1}, Lemma \ref{minimax} and \eqref{traj}, for $\varepsilon>0$ small enough, map $\eta(u, .)$ is well defined in $[0,\infty)$, for each $u\in \Psi_{c_{s_{m}}+\varepsilon}=\{u\in \mathcal{S}_{\mathcal{A}}: \Psi(u)\leq c_{s_{m}}+\varepsilon\}$. Indeed, suppose by contradiction that $\eta(u, t_{0})\in \partial\mathcal{S}_{\mathcal{A}}$ for some $u\in \Psi_{c_{s_{m}}+\varepsilon}$ and $t_{0}>0$, where $\varepsilon\in (0, \inf_{u\in \partial\mathcal{S}_{\mathcal{A}}}\int_{[u\neq 0]}\beta(x) dx- c_{s_{m}})$. Then, by \eqref{iguald}, Proposition \ref{main1} and Lemma \ref{minimax}
$$
\Psi(\eta(u, 0))=\Psi(u)\leq c_{s_{m}}+\varepsilon<\int_{[\eta(u, t_{0})\neq 0]}\beta(x) dx\leq \liminf\limits_{t\to t_{0}}\Psi(\eta(u, t)).
$$
Thus, there exists $0<t_{\ast}<t_{0}$, such that $\eta(u, t_{\ast})\in \mathcal{S}_{\mathcal{A}}$ and
$$
\Psi(\eta(u, 0))<\Psi(\eta(u, t_{\ast})),
$$
what contradicts \eqref{traj}.

Therefore, it makes sense the  third claim of Theorem 3.11 in \cite{Str}, namely,
\begin{equation}\label{def}
\eta(\Psi_{c+\varepsilon}\backslash M, 1)\subset \Psi_{c-\varepsilon}.
\end{equation}
Let us choose $B\in \gamma_{j+p}$ such that $\sup_{B}\Psi\leq c+\varepsilon$. From Lemma \ref{nosso}$(iv)$, $\overline{B\backslash M}\in\gamma_{j}$. It follows again from Lemma \ref{nosso}$(iii)$ that $\eta(\overline{B\backslash M}, 1)\in \gamma_{j}$. Therefore, by \eqref{def} and the definition of c, we have
$$
c\leq \sup_{\eta(\overline{B\backslash M}, 1)}\Psi\leq c-\varepsilon,
$$
that is a contradiction. Then $\gamma(K_{c})\geq p+1$.
\end{proof}

We are now ready to prove the following multiplicity result.

\vspace{0.5cm}

\noindent {\bf Proof of Theorem \ref{teore2}:}

First of all, note that the levels $0<c_{j}<\infty$ are critical levels of $\Psi$. In fact, suppose by contradiction that $c_{j}$ is regular for some $j$. Invoking Theorem 3.11 in \cite{Str}, with $\beta=c_{j}$, $\overline{\varepsilon}=1$, $N=\emptyset$, there exist $\varepsilon>0$ and a family of odd homeomorphisms $\eta(., t)$ satisfying the properties of such theorem. Choosing $B\in \gamma_{j}$ such that $\sup_{B}\Psi<c_{j}+\varepsilon$ and arguing as in the proof of Proposition \ref{mult} we get a contradiction. 

Finally, if levels $c_{j}$, $1\leq j\leq s_{m}$, are different from each other, it follows from Proposition \ref{proposition3}$(iv)$ that the result follows. On the other hand, if $c_{j}=c_{j+1}\equiv c$ for some $1\leq j\leq s_{m}$, it follows from Proposition \ref{def} that $\gamma(K_{c})\geq 2$. Combining the last inequality with Lemma \ref{genus}$(vi)$ and Proposition \ref{proposition3}$(iv)$, we conclude that \eqref{P} has infinitely many pairs of nontrivial solutions.
$\square$

\end{document}